\def\clift#1{#1^{\scriptscriptstyle{\mathrm{C}}}}
\def\hlift#1{#1^{\scriptscriptstyle{\mathrm{H}}}}
\def\vlift#1{#1^{\scriptscriptstyle{\mathrm{V}}}}
\def\H#1{{#1}^{\scriptscriptstyle H}}
\def\V#1{{#1}^{\scriptscriptstyle V}}
\def\dv#1{d^{\scriptscriptstyle V}_{#1}}
\def\dh#1{d^{\scriptscriptstyle H}_{#1}}
\def\DV#1{{\rm D}^{\scriptscriptstyle V}_{#1}}
\def\DH#1{{\rm D}^{\scriptscriptstyle H}_{#1}}
\def\fpd#1#2{{\displaystyle\frac{\partial #1}{\partial #2}}}
\def\spd#1#2#3{{\displaystyle\frac{\partial^2 #1}{\partial #2\partial #3}}}
\def\lie#1{{\mathcal{L}}_{#1}}
\def\del{\nabla}
\def\dddotqi{\raisebox{-1.4pt}{${\stackrel{\raisebox{-0.7pt}{.\kern-1pt.\kern-1pt.}}{q}}^i$}}
\def\cinfty#1{C^{\scriptscriptstyle\infty}(#1)}
\def\R{\mathbb{R}}
\def\T{{\bf T}}
\def\onehalf{{\textstyle\frac12}}
\def\onethird{{\textstyle\frac13}}
\def\onefourth{{\textstyle\frac14}}
\def\threefourth{{\textstyle\frac34}}
\def\oneeights{{\textstyle\frac18}}
\def\forms#1#2{{\textstyle\bigwedge}^{#1}(#2)}
\def\Vforms#1#2{{\textstyle V}^{#1}(#2)}
\def\vectorfields#1{{\cal X}(#1)}
\def\tvectorfields{\vectorfields{\tau}}
\def\conn#1#2#3{\setbox1=\hbox{$\scriptstyle{#2}{#3}$}%
\setbox2=\hbox to\wd1{$\hfil\scriptstyle{#1}\hfil$}
\Gamma^{\!\box2}_{\!\box1}}
\def\hook{{\mathchoice
{\vrule height 0pt depth 0.4pt width 3pt \vrule height 5pt depth 0.4pt
\kern 3pt}
{\vrule height 0pt depth 0.4pt width 3pt  \vrule height 5pt depth
0.4pt\kern 3pt}
{\vrule height 0pt depth 0.2pt width 1.5pt  \vrule height 3pt depth
0.2pt width 0.2pt \kern 1pt}
{\vrule height 0pt depth 0.2pt width 1.5pt  \vrule height 3pt depth
0.2pt width 0.2pt \kern 1pt} }}
\newtheorem{thm}{Theorem}
\newtheorem{lem}{Lemma}
\newtheorem{prop}{Proposition}
\newtheorem{cor}{Corollary}
\begin{document}

\title{The inverse problem for Lagrangian systems with certain non-conservative forces}
\author{T.\ Mestdag$^a$, W.\ Sarlet$^{a,b}$ and M.\ Crampin$^a$\\[2mm]
{\small ${}^a$Department of Mathematics, Ghent University }\\
{\small Krijgslaan 281, B-9000 Ghent, Belgium}\\[1mm]
{\small ${}^b$Department of Mathematics and Statistics, La Trobe University}\\
{\small Bundoora, Victoria 3086, Australia}
}

\date{}

\maketitle

\begin{quote}
{\small {\bf Abstract.} We discuss two generalizations of the
inverse problem of the calculus of variations, one in which a given
mechanical system can be brought into the form of Lagrangian equations
with non-conservative forces of a generalized Rayleigh dissipation
type, the other leading to Lagrangian equations with so-called
gyroscopic forces. Our approach focusses primarily on obtaining
coordinate-free conditions for the existence of a suitable
non-singular multiplier matrix, which will lead to an equivalent
representation of a given system of second-order equations as one
of these Lagrangian systems with non-conservative forces.
\\[1mm]
{\bf Keywords.} Lagrangian systems, inverse
problem, Helmholtz conditions, dissipative forces, gyroscopic forces.\\[1mm]
{\bf MSC (2000).} 70H03, 70F17, 49N45}
\end{quote}

\section{Introduction}

The inverse problem of Lagrangian mechanics is the question: given a
system of second-order ordinary differential equations, under what
circumstances does there exist a regular Lagrangian function, such
that the corresponding Lagrange equations are equivalent (i.e.\ have
the same solutions) as the original equations. Locally, the question
can be translated immediately into more precise terms as follows:
considering a given second-order system in normal form
\begin{equation}
\ddot{q}^i = f^i(q,\dot{q}), \label{normal}
\end{equation}
which (for the time being) we take to be autonomous for simplicity,
what are the conditions for the existence of a symmetric, non-singular
multiplier matrix $g_{ij}(q,\dot{q})$ such that
\[
g_{ij}(\ddot{q}^j - f^j(q,\dot{q})) \equiv
\frac{d}{dt}\left(\fpd{L}{\dot{q}^i}\right) - \fpd{L}{q^i}
\]
for some $L$. Clearly $(g_{ij})$, if it exists, will become the
Hessian of the Lagrangian $L$. The literature on this problem is
extensive; the conditions for the existence of $L$ are usually
referred to as the Helmholtz conditions, but these can take many
different forms depending on the mathematical tools one uses and on
the feature one focusses on. For a non-exhaustive list of different
approaches see \cite{Sant}, \cite{Cra81}, \cite{Sa}, \cite{Mor},
\cite{AT92}, \cite{CSMBP}, \cite{GM1}, \cite{APST},
\cite{olgageoff}, \cite{Buca}. In this paper, the tools stem from
differential geometry and therefore provide coordinate-free results.
In addition, while we will actually study generalizations of the
above problem which allow for certain classes of non-conservative
forces, the attention will be mainly on conditions on the multiplier
$g$.

We will consider two types of non-conservative forces, leading to
Lagrangian equations of one of the following forms:
\begin{equation}
\frac{d}{dt}\left(\fpd{L}{\dot{q}^i}\right) - \fpd{L}{q^i} =
\fpd{D}{\dot{q}^i}, \label{dissip1}
\end{equation}
or
\begin{equation}
\frac{d}{dt}\left(\fpd{L}{\dot{q}^i}\right) - \fpd{L}{q^i} =
\omega_{ki}(q)\dot{q}^k, \quad \omega_{ki}= - \omega_{ik}.
\label{dissip2}
\end{equation}
In the first case, when the function $D$ is quadratic in the
velocities (and $-D$ is positive definite) the classical terminology
is that we have dissipation of Rayleigh type (see e.g.\
\cite{Goldstein}); we will not put restrictions on the form of $D$,
however.  In the second case, in which the existence of a $D$ as in
(\ref{dissip1}) is excluded, the right-hand side is often referred to
as a gyroscopic force (see e.g.\ \cite{Rosenberg}).

Perhaps we should specify first what we will not do in this paper.  In
older contributions to the inverse problem for dissipative systems,
such as \cite{napoli}, the emphasis was on trying to recast a
dissipative system into the form of genuine Euler-Lagrange equations,
that is to say that in the case of given equations of type
(\ref{dissip1}) one would try to find a different function $L'$ such
that the Euler-Lagrange equations of $L'$ are equivalent to the
given system.  In contrast, our goal here is to study under what
circumstances a given second-order system in normal form
(\ref{normal}) can be recast into the form (\ref{dissip1}) (or
(\ref{dissip2})) for some functions $L$ and $D$ (or $L$ and
$\omega_{ki}$).

In order to explain our objectives in more precise terms, let us
recall first some of the different ways of characterizing the inverse
problem conditions in the classical situation.  The natural
environment for a second-order system is a tangent bundle $TQ$, with
coordinates $(q,v)$ say, where it is represented by a vector field
$\Gamma$ of the form
\begin{equation}
\Gamma = v^i\fpd{}{q^i} + f^i(q,v)\fpd{}{v^i}. \label{Gamma}
\end{equation}
If $S=(\partial/\partial v^i)\otimes dq^i$ denotes the type $(1,1)$
tensor field which characterizes the canonical almost tangent
structure on $TQ$ \cite{Cra83, Gri1}, $\Gamma$ represents a
Lagrangian system provided there exists a regular Lagrangian
function $L$ such that (see e.g.\ \cite{SaCaCr84})
\begin{equation}
\lie{\Gamma}(S(dL))=dL; \label{thetaL1}
\end{equation}
$\theta_L:=S(dL)$ is the Poincar\'e-Cartan 1-form.  The above
condition is perhaps the most compact formulation of the problem, but
has little or no practical value when it comes to testing whether such
an $L$ exists for a given $\Gamma$.  A shift of attention towards the
existence of a multiplier leads to the following necessary and
sufficient conditions \cite{Cra81}: the existence of a non-degenerate
2-form $\omega\in\forms{2}{TQ}$, such that
\begin{equation}
\lie{\Gamma}\omega=0, \quad \omega(\V{X},\V{Y})=0, \quad
i_{\H{Z}}d\omega(\V{X},\V{Y})=0,
\quad \forall X,Y,Z\in\vectorfields{M}. \label{Mike}
\end{equation}
Here $\V{X}$ and $\H{X}$ refer to the vertical and horizontal lift
of vector fields, respectively. The latter makes use of the
canonical Ehresmann connection on $\tau:TQ\rightarrow Q$ associated
with a given second-order vector field $\Gamma$: in coordinates, the
vertical and horizontal lift are determined by
\begin{equation}
V_i:=\V{\fpd{}{q^i}} = \fpd{}{v^i}, \qquad H_i:=\H{\fpd{}{q^i}} =
\fpd{}{q^i} - \Gamma_i^j\fpd{}{v^j}, \quad \mbox{where} \quad
\Gamma_i^j= - \onehalf \fpd{f^j}{v^i}. \label{conn}
\end{equation}
Such a 2-form $\omega$ will be closed, hence locally exact, and as such
will be the exterior derivative $d\theta_L$ for some Lagrangian $L$.
At this point it is interesting to observe that the $2n\times 2n$
skew-symmetric component matrix of $\omega$ is completely determined by
the $n\times n$ symmetric matrix
\[
g_{ij} = \fpd{^2L}{v^i\partial v^j}.
\]
The matrix $(g_{ij})$ geometrically represents the components of a
$(0,2)$ symmetric tensor field $g$ along the tangent bundle
projection $\tau$, and the relationship between $\omega$ on $TQ$ and
$g$ along $\tau$ has an intrinsic meaning as well: $\omega$ is the
K\"ahler lift of $g$ (see \cite{MaCaSaII}). A more concise
formulation of the Helmholtz conditions therefore, when viewed as
conditions on the multiplier $g$, makes use of the calculus of
derivations of forms along $\tau$, as developed in \cite{MaCaSaI,
MaCaSaII}. We will show in the next section how both the conditions
(\ref{thetaL1}) and (\ref{Mike}) have an equivalent formulation in
those terms, and this will be the basis for the generalization to
Lagrangian systems with non-conservative forces, which will be the
subject of the subsequent sections.

The first authors to discuss the inverse problem, in the sense of
analyzing the conditions which a given representation of a
second-order system must satisfy to be of the form (\ref{dissip1}),
were Kielau and Maisser \cite{germ1}.  We showed in \cite{zammnote}
how the results they obtained via an entirely analytical approach can
in fact be reduced to a smaller set.  But we also argued in the
concluding remarks of that paper that the more important issue is the
one we formulated above, which starts from a normal form
representation of the dynamical system.  For that purpose it is better
to approach the problem in a coordinate-independent way, i.e.\ to make
use of the tools of differential geometry already referred to.  We
will see that the methods we will develop for the dissipative case
(\ref{dissip1}) apply equally to the gyroscopic case (\ref{dissip2}).
To the best of our knowledge the latter problem has not been dealt with
before in its entirety (though a relevant partial result has been
published in \cite{olgageoff}).  An additional advantage of the
coordinate-independence of our conditions is that they cover without
extra effort results such as those derived in \cite{germ2} for the
description of Lagrangian systems in `nonholonomic velocities'.  In
Section~3 we follow the lines of the construction of Helmholtz
conditions on the multiplier $g$ for the standard inverse problem, and
arrive in this way at necessary and sufficient conditions which involve
$g$ and $D$ in the dissipative case, and $g$ and $\omega$ in the
gyroscopic situation.  At the end of this section we briefly discuss
how the partial result mentioned above is related to our work.  In
Section~4 we succeed in eliminating the unknown $D$ and $\omega$
altogether to arrive at necessary and sufficient conditions involving
the multiplier $g$ only.  This is particularly interesting, because a
given $\Gamma$ may actually admit multiple representations of the form
(\ref{dissip1}) for example.  In other words, different choices of a
multiplier $g$ may exist, which each require an adapted (generalized)
dissipation function $D$ to match the required format.  In fact it
cannot be excluded that a given $\Gamma$ may actually have
representations in the form (\ref{dissip1}) and (\ref{dissip2}) at the
same time, of course with different multipliers $g$ (and thus different
Lagrangians $L$).  We will encounter such situations among the
illustrative examples discussed in Section~5, where we also briefly
indicate in the concluding remarks how the whole analysis can be
carried over to the case of time-dependent systems.  In an appendix we
make an excursion to a different geometrical approach which in fact is
essentially time-dependent: we use techniques from the theory
of variational sequences to relate our results more
closely, at least in the dissipative case, to those obtained in
\cite{germ1}, which after all was the work which first brought this
subject to our attention.

\section{Basic set-up}

In order to keep our analysis reasonably self-contained, we need to
recall the basics of the calculus of derivations of forms along the
tangent bundle projection $\tau:TQ\rightarrow Q$. Vector fields
along $\tau$ are sections of the pull-back bundle $\tau^*TQ
\rightarrow TQ$ and constitute a module over $\cinfty{TQ}$, denoted
by $\tvectorfields$. Likewise, a $k$-form along $\tau$ assigns to
every point  $v_q\in TQ$ an exterior $k$-form at $q=\tau(v_q)\in Q$;
we use the symbol $\forms{}{\tau}$ for the $\cinfty{TQ}$-module of
scalar forms along $\tau$ and $\Vforms{}{\tau}$ for the module of
vector-valued forms. The theory of derivations of such forms, as
established in \cite{MaCaSaI, MaCaSaII}, follows closely the
pioneering work of Fr\"olicher and Nijenhuis \cite{FrNi56}. The
difference is that there is a natural vertical exterior derivative
$\dv{}$ available, but a full classification requires an additional
horizontal exterior derivative $\dh{}$, which must come from a given
connection: in our situation, this is the connection associated with
$\Gamma$ mentioned earlier. We limit ourselves here to a brief
survey of the concepts and properties we will need. An elaborate
version of the theory (with rather different notations) can also be
found in \cite{Szila}.

Elements of $\forms{}{\tau}$ in coordinates look like forms on the
base manifold $Q$ with coefficients which are functions on $TQ$.
Thus they can be seen also as so-called semi-basic forms on $TQ$,
and we will generally make no notational distinction between the two
possible interpretations. It is clear that derivations of such forms
are completely determined by their action on $\cinfty{TQ}$ and on
$\forms{1}{Q}$. As such, the vertical and horizontal exterior
derivatives are determined by
\[
\dv{}F = V_i(F)dq^i, \qquad \dh{}F = H_i(F) dq^i, \qquad
F\in\cinfty{TQ},
\]
\[
\dv{}dq^i=0, \qquad \dh{}dq^i=0.
\]
Obviously, for $L\in\cinfty{TQ}$, $\dv{}L\in\forms{1}{\tau}$ has the
same coordinate representation as $S(dL)\in\forms{1}{TQ}$; in line
with the above remark therefore, we will also write
$\theta_L=\dv{}L$ for the Poincar\'e-Cartan 1-form. Derivations of
type $i_*$ are defined as in the standard theory. For
$A\in\Vforms{}{\tau}$, we put
\[
\dv{A} = [i^{}_A, \dv{}], \qquad \dh{A} = [i^{}_A, \dh{}],
\]
and call these derivations of type $\dv{*} $ and $\dh{*}$
respectively. The action of all such derivations can be extended to
vector-valued forms and then another algebraic type derivation is
needed for a classification, but we will introduce such extensions,
which can all be found in \cite{MaCaSaI, MaCaSaII}, only when
needed. The horizontal and vertical lift operations, already
referred to in the introduction, trivially extend to vector fields
along $\tau$ and then every vector field on $TQ$ has a unique
decomposition into a sum of the form $\H{X}+\V{Y}$, with
$X,Y\in\tvectorfields$. Looking in particular at the decomposition
of the commutator $[\H{X},\V{Y}]$ suffices to discover two important
derivations of degree zero:
\[
[\H{X},\V{Y}]= \V{(\DH{X}Y)} - \H{(\DV{Y}X)}.
\]
They extend to forms by duality and are called the horizontal and
vertical covariant derivatives. In coordinates
\begin{eqnarray*}
&& \DV{X} F = X^i\,V_i(F), \quad \DV{X}\fpd{}{q^i} = 0, \quad \DV{X}dq^i=0, \\[1mm]
&&\DH{X} F = X^i\,H_i(F), \quad \DH{X}\fpd{}{q^i} = X^j
V_j(\Gamma^k_i) \fpd{}{q^k}, \quad \DH{X}dq^i=-X^j V_j(\Gamma^i_k)
dq^k.
\end{eqnarray*}
For later use, we mention the following formulas for
computing exterior derivatives of, for example, a 1-form $\alpha$ or
a 2-form $\rho$ along $\tau$:
\begin{align}
 \dv{}\!\alpha\,(X,Y) &= \DV{X}\alpha \,(Y) - \DV{Y}\alpha \,(X), \qquad
\alpha\in \forms{1}{\tau}, \label{dv1form} \\ \dv{}\!\rho\,(X,Y,Z)
&= \sum_{X,Y,Z} \DV{X}\rho\,(Y,Z), \qquad \rho\in\forms{2}{\tau}
\label{dv2form},
\end{align}
and similarly for $\dh{}$.  Here $\sum_{X,Y,Z}$ represents the cyclic
sum over the indicated arguments.  It is also of interest to list the
decomposition of the other brackets of lifted vector fields:
\begin{align*}
[\V{X},\V{Y}] &= \V{\left(\DV{X}Y-\DV{Y}X\right)}, \\
{}[\H{X},\H{Y}] &= \H{\left(\DH{X}Y-\DH{Y}X\right)} +
\V{\left(R(X,Y)\right)}\,.
\end{align*}
The latter relation is just one of many equivalent ways in which the
curvature tensor $R\in\Vforms{2}{\tau}$ of the non-linear connection
can be defined. The connection coming from $\Gamma$ has no torsion
(since (\ref{conn}) obviously implies that
$V_i(\Gamma^j_k)=V_k(\Gamma^j_i)$): it follows that $\dv{}$ and
$\dh{}$ commute. In fact the commutation table of the exterior
derivatives, for their action on scalar forms, is given by
\begin{equation}
\onehalf [\dv{},\dv{}]= \dv{}\dv{} = 0, \qquad \dv{}\dh{}=
-\dh{}\dv{},\qquad \onehalf [\dh{},\dh{}]=\dh{}\dh{}= \dv{R}. \label{commutators}
\end{equation}

Finally, the given dynamical system $\Gamma$  comes canonically
equipped with two other operators which are crucial for our
analysis, namely the dynamical covariant derivative $\nabla$, a
degree zero derivation, and the Jacobi endomorphism
$\Phi\in\Vforms{1}{\tau}$. Again, the simplest way of introducing them
comes from the decomposition of a Lie bracket: they are the
uniquely determined operations for which, for each
$X\in\tvectorfields$,
\[
[\Gamma,\H{X}] = \H{(\del X)} + \V{(\Phi X)}.
\]
The usual duality rule $\del\langle X,\alpha\rangle = \langle \del
X,\alpha\rangle + \langle X,\del\alpha\rangle$ is used to extend the
action of $\del$ to 1-forms, and subsequently to arbitrary tensor
fields along $\tau$. In coordinates,
\begin{equation}
\del F = \Gamma(F),\quad \del\left(\fpd{}{q^j}\right) = \Gamma^i_j
\fpd{}{q^i}, \quad \del(dq^i) = - \Gamma^i_j dq^j\,, \label{del}
\end{equation}
and
\begin{equation}
\Phi^i_j = - \fpd{f^i}{q^j} - \Gamma^k_j \Gamma^i_k -
\Gamma(\Gamma^i_j) . \label{Phi}
\end{equation}
One clear indication of the importance of these operators is the
following link with the curvature of the connection:
\begin{equation}
\dv{}\Phi = 3\,R, \qquad \dh{}\Phi = \del R. \label{Phi-R}
\end{equation}

We are now ready to go back to the generalities about the inverse
problem discussed in the previous section. To begin with, using the
tools which have just been established, the compact formulation
(\ref{thetaL1}) of the inverse problem is equivalent (see
\cite{MaCaSaII}) to the existence of a regular function
$L\in\cinfty{TQ}$ such that
\begin{equation}
\del\theta_L = \dh{}L. \label{thetaL2}
\end{equation}
Secondly, the necessary and sufficient conditions (\ref{Mike}) now
really become conditions on the multiplier matrix; they are translated
via the K\"ahler lift \cite{MaCaSaII} into the existence of a
non-degenerate, symmetric $(0,2)$-tensor $g$ along $\tau$ satisfying
the requirements
\begin{equation}
\del g=0, \qquad g(\Phi X,Y)=g(X,\Phi Y), \qquad
\DV{X}g(Y,Z)=\DV{Y}g(X,Z). \label{Helmholtz2}
\end{equation}
It is possible to prove directly that (\ref{thetaL2}) implies
(\ref{Helmholtz2}) and vice versa (a sketch of such a proof was
presented in \cite{CaMa90}). We will not show how to do this here,
however, as it can easily be seen later on as a particular case of
the more general inverse problem studies we will start analyzing
now.

\section{Lagrangian systems with dissipative or gyroscopic forces}

Consider first equations of type (\ref{dissip1}). It is obvious
that, at the level of a characterization like (\ref{thetaL1}), a
given second-order field $\Gamma$ will correspond to equations of
type (\ref{dissip1}) if and only if there exist a regular function
$L$ and a function $D$ such that
\begin{equation}
\lie{\Gamma}(S(dL)) = dL + S(dD). \label{thetaLD1}
\end{equation}
We take the opportunity to illustrate first how such a relation, when
stripped to its bare essentials, i.e.\ when one observes that it is in
fact a condition on only $n$ of the $2n$ components, is transformed
into a corresponding generalization of (\ref{thetaL2}). To this end,
note first that there exists a dual notion of horizontal and
vertical lifts of 1-forms, from $\forms{1}{\tau}$ to
$\forms{1}{TQ}$, defined by $\H{\alpha}(\H{X})=\alpha(X),\
\H{\alpha}(\V{X})=0$, and likewise for $\V{\alpha}$. We then have
the following decompositions, for any $\alpha\in\forms{1}{\tau}$ and
$L\in\cinfty{TQ}$:
\begin{align}
\lie{\Gamma}\H{\alpha} &= \V{\alpha} + \H{(\del\alpha)}, \label{liealphah}\\
dL &= \H{(\dh{}L)} + \V{(\dv{}L)}. \label{dL}
\end{align}
Coming back to the notational remarks of the previous section: the
horizontal lift is technically speaking the rigorous way of
identifying a 1-form along $\tau$ with a semi-basic 1-form on $TQ$.
So, when convenient, as will be the case in establishing the next
result, we can also write $\theta_L=\H{(\dv{}L)}$, for example.

\begin{prop} \label{prop1}
The second-order field $\Gamma$ represents a dissipative system of
type (\ref{dissip1}) if and only if there is a regular function
$L\in\cinfty{TQ}$ and a function $D\in\cinfty{TQ}$ such that
\begin{equation}
\del\theta_L = \dh{}L + \dv{}D. \label{thetaLD2}
\end{equation}
\end{prop}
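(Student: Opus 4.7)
The plan is to derive (\ref{thetaLD2}) from (\ref{thetaLD1}) by decomposing the latter into its horizontal and vertical components using the formulas (\ref{liealphah}) and (\ref{dL}), exactly as one does to pass from (\ref{thetaL1}) to (\ref{thetaL2}) in the standard case. The starting point is the observation, already highlighted in the text, that the Poincar\'e--Cartan 1-form can be identified with the horizontal lift of a 1-form along $\tau$, namely $\theta_L=S(dL)=\H{(\dv{}L)}$, and similarly $S(dD)=\H{(\dv{}D)}$.

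First I would rewrite the left-hand side of (\ref{thetaLD1}) by applying (\ref{liealphah}) to $\alpha=\dv{}L\in\forms{1}{\tau}$, obtaining
\[
\lie{\Gamma}\theta_L=\lie{\Gamma}\H{(\dv{}L)}=\V{(\dv{}L)}+\H{(\del\,\dv{}L)}=\V{(\dv{}L)}+\H{(\del\theta_L)}.
\]
Then I would decompose the right-hand side of (\ref{thetaLD1}) using (\ref{dL}) together with $S(dD)=\H{(\dv{}D)}$:
\[
dL+S(dD)=\H{(\dh{}L)}+\V{(\dv{}L)}+\H{(\dv{}D)}=\V{(\dv{}L)}+\H{(\dh{}L+\dv{}D)}.
\]
Equating the two expressions, the vertical parts $\V{(\dv{}L)}$ cancel on both sides, leaving the identity of horizontal lifts $\H{(\del\theta_L)}=\H{(\dh{}L+\dv{}D)}$. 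Since the horizontal-lift map from $\forms{1}{\tau}$ to the semi-basic 1-forms on $TQ$ is injective, this is equivalent to the desired equation (\ref{thetaLD2}).

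All steps are reversible, so the argument proves both implications simultaneously, and the regularity of $L$ carries over unchanged between the two formulations since the Hessian of $L$ with respect to the fibre coordinates is unaffected by the reformulation. There is no real obstacle here: the only point to handle carefully is the systematic identification of $\theta_L$ and $\theta_D$ with horizontal lifts of forms along $\tau$, so that (\ref{liealphah}) and (\ref{dL}) can be applied cleanly. This is precisely the technical convenience for which the calculus of derivations of forms along $\tau$ was set up in Section 2, and it is what allows (\ref{thetaLD1}) to be reduced from a condition on $2n$ components on $TQ$ to the stated condition (\ref{thetaLD2}) on the $n$ relevant components.
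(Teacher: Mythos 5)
Your proposal is correct and follows essentially the same route as the paper's own proof: identifying $\theta_L=\H{(\dv{}L)}$ and $S(dD)=\H{(\dv{}D)}$, then applying the decompositions (\ref{liealphah}) and (\ref{dL}) to (\ref{thetaLD1}) and comparing horizontal components. You merely spell out the cancellation of the vertical parts and the injectivity of the horizontal lift, which the paper leaves implicit.
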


\begin{proof}
We have that $S(dD)=\H{(\dv{}D)}$ for any function $D$, and in
particular $S(dL)=\H{(\dv{}L)}= \theta_L$.  Using the decompositions
(\ref{liealphah}) and (\ref{dL}), the condition (\ref{thetaLD1}) then
immediately translates into (\ref{thetaLD2}).
\end{proof}

\begin{cor}
The condition (\ref{thetaLD2}) on the existence of functions $L$ and
$D$ is equivalent to
\begin{equation}
\dh{}\theta_L =0, \label{dhthetaL}
\end{equation}
which is a necessary and sufficient condition on $L$ only.
\end{cor}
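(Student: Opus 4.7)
My plan is to apply $\dv{}$ to (\ref{thetaLD2}), exploit $\dv{}\dv{}=0$, and convert the result into a statement about $\dh{}\theta_L$ via standard commutation relations between $\del$, $\dv{}$ and $\dh{}$. The key ingredient that is not already displayed in Section~2 is the commutator identity
\[
\dv{}\del = \del\dv{} + \dh{}.
\]
Both sides are degree-one derivations on $\forms{}{\tau}$, so by the usual generator argument it suffices to check agreement on functions $F\in\cinfty{TQ}$ and on the coordinate 1-forms $dq^i$. On functions it drops out of the bracket computation $[V_i,\Gamma] = H_i - \Gamma^j_i V_j$ (which follows from (\ref{conn})); on $dq^i$ both sides vanish, using the torsion-free property $V_j(\Gamma^i_k)=V_k(\Gamma^i_j)$.

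Combined with $\dv{}\dh{}=-\dh{}\dv{}$ from (\ref{commutators}) and the trivial identity $\dv{}\theta_L=\dv{}\dv{}L=0$, this yields two consequences, valid for any $L\in\cinfty{TQ}$ regardless of (\ref{thetaLD2}):
\[
\dv{}\del\theta_L \;=\; \del\dv{}\theta_L + \dh{}\theta_L \;=\; \dh{}\theta_L, \qquad
\dv{}\dh{}L \;=\; -\dh{}\dv{}L \;=\; -\dh{}\theta_L.
\]

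For the forward direction, applying $\dv{}$ to (\ref{thetaLD2}) and using $\dv{}\dv{}D=0$ gives $\dv{}\del\theta_L = \dv{}\dh{}L$, so the two identities above force $\dh{}\theta_L = -\dh{}\theta_L$, i.e.\ $\dh{}\theta_L=0$. For the converse, set $\alpha := \del\theta_L - \dh{}L \in \forms{1}{\tau}$; those same identities yield $\dv{}\alpha = 2\dh{}\theta_L$, so the hypothesis $\dh{}\theta_L=0$ makes $\alpha$ vertically closed. The vertical Poincar\'e lemma for forms along $\tau$ (which amounts to the ordinary Poincar\'e lemma on each fiber of $\tau$) then produces, locally, a function $D\in\cinfty{TQ}$ with $\alpha=\dv{}D$, which is precisely (\ref{thetaLD2}).

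The only delicate step is the commutator $[\dv{},\del]=\dh{}$; once it is in place, necessity is immediate from $\dv{}\dv{}=0$ and sufficiency reduces to an application of the fiberwise Poincar\'e lemma. No additional geometric data beyond $L$ itself appears in the resulting criterion $\dh{}\theta_L=0$, which justifies the phrasing ``condition on $L$ only''.
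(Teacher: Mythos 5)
Your proposal is correct and follows essentially the same route as the paper: both arguments rest on the commutator $[\del,\dv{}]=-\dh{}$, the anticommutation $\dv{}\dh{}=-\dh{}\dv{}$, and the triviality of $\dv{}$-cohomology (vertical Poincar\'e lemma). The only difference is bookkeeping—you apply $\dv{}$ to the whole relation and invoke the fibrewise lemma for $\alpha=\del\theta_L-\dh{}L$, whereas the paper isolates $\dh{}L=\onehalf\,\dv{}(\Gamma(L)-D)$ and phrases the same cohomological fact as the $\dv{}$-exactness of $\dh{}L$.
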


\begin{proof}
Using the commutator property $[\del,\dv{}]= - \dh{}$ to re-express
$\del\theta_L=\del\dv{}L$ in (\ref{thetaLD2}), we immediately get
the expression
\[
\dh{}L= \onehalf\, \dv{}(\Gamma(L)-D),
\]
which is equivalent to saying that $\dh{}L=\dv{}G$ for some function
$G$. This in turn, in view of (\ref{commutators}) and the triviality
of $\dv{}$-cohomology, is equivalent to $\dh{}\theta_L= -
\dv{}\dh{}L=0$.
\end{proof}

We now want to translate these results into conditions on the
multiplier $g$ which generalize (\ref{Helmholtz2}).  As we observed
earlier, this $g$ will be the Hessian of $L$, so we look first
at the relation between a function and its Hessian in intrinsic terms.
To that end, we introduce covariant differentials $\DV{}$ and
$\DH{}$ defined as follows: for any tensor field $T$ along
$\tau$ and $X\in\tvectorfields$,
\[
\DV{}T(X,\ldots) = \DV{X}T(\ldots),
\]
and similarly for $\DH{}$.  Then for any $F\in\cinfty{TQ}$ we can
write for the corresponding Poincar\'e-Cartan 1-form $\theta_F =
\dv{}F=\DV{}F$, and define the Hessian tensor $g_F$ of $F$ as
$g_F=\DV{}\DV{}F$, which means that
\begin{equation}
g_F(X,Y) = \DV{X}\DV{Y}F - \DV{\DV{X}Y}F = \DV{X}\theta_F(Y).
\label{gF}
\end{equation}

\begin{lem} \label{lemma1} For any $F\in\cinfty{TQ}$, its
corresponding Hessian tensor $g_F$ is symmetric
and satisfies $\DV{X}g_F(Y,Z)=\DV{Y}g_F(X,Z)$, i.e.\ $\DV{}\!g_F$ is
symmetric in all its arguments.  Conversely, any symmetric $g$ along
$\tau$ for which $\DV{}\!g$ is symmetric is the Hessian of some
function $F$.  Secondly, if $\Phi$ represents any type $(1,1)$ tensor
field along $\tau$, we have
\begin{equation}
\Phi\hook g_F - (\Phi\hook g_F)^T =  i_{\dv{}\Phi}\theta_F -
\dv{}i_\Phi\theta_F, \label{Phi-g}
\end{equation}
where $(\Phi\hook g_F - (\Phi\hook g_F)^T)(X,Y) := g_F(\Phi X,Y) -
g_F(X,\Phi Y)$.
\end{lem}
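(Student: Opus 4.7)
The plan is to separate the lemma into three pieces: the direct direction of the first part (symmetry of $g_F$ and of $\DV{}g_F$), the converse of the first part (existence of a potential $F$), and the identity~(\ref{Phi-g}). Of these, only the converse requires genuine work, through a two-stage fiberwise Poincar\'e lemma; the other two are short computations once the definitions are unfolded.

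For the direct direction I would work in coordinates. Using $\DV{X}F=X^iV_iF$ and $\DV{X}(\partial/\partial q^j)=0$, the definition~(\ref{gF}) collapses to $g_F=V_iV_jF\,dq^i\otimes dq^j$, so symmetry of $g_F$ is immediate from $V_iV_j=V_jV_i$, and analogously $(\DV{Z}g_F)(X,Y)=Z^kX^iY^jV_kV_iV_jF$ is symmetric in all three arguments. A coordinate-free derivation is also available by applying the bracket identity $[\V{X},\V{Y}]=\V{(\DV{X}Y-\DV{Y}X)}$ to $F$, and then to $\DV{Z}F$ for the triple symmetry.

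For the converse, given a symmetric $g_{ij}(q,v)$ with $V_kg_{ij}$ totally symmetric, I build $F$ fiberwise in two stages. First set
\[
\alpha_j(q,v):=\int_0^1 g_{ij}(q,tv)\,v^i\,dt.
\]
Differentiating under the integral and substituting $V_kg_{ij}=V_ig_{kj}$ recognises the integrand of $V_k\alpha_j$ as $(d/dt)\bigl(t\,g_{kj}(q,tv)\bigr)$, so $V_k\alpha_j=g_{kj}$; in particular $V_k\alpha_j=V_j\alpha_k$, and the second homotopy $F(q,v):=\int_0^1\alpha_j(q,sv)\,v^j\,ds$ yields $V_kF=\alpha_k$ by the analogous telescoping, whence $V_iV_jF=V_i\alpha_j=g_{ij}$.

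Finally, for~(\ref{Phi-g}) I would expand both sides. The left-hand side on $(X,Y)$ is $g_F(\Phi X,Y)-g_F(X,\Phi Y)$. On the right, applying~(\ref{dv1form}) to $i_\Phi\theta_F$ together with~(\ref{gF}) and the product rule for $\DV{X}$ gives
\[
(\DV{X}(i_\Phi\theta_F))(Y)=g_F(X,\Phi Y)+\theta_F\bigl((\DV{X}\Phi)Y\bigr),
\]
while the natural analogue of~(\ref{dv1form}) for the vector-valued 1-form $\Phi$ reads $\dv{}\Phi(X,Y)=(\DV{X}\Phi)Y-(\DV{Y}\Phi)X$, so $(i_{\dv{}\Phi}\theta_F)(X,Y)=\theta_F((\DV{X}\Phi)Y)-\theta_F((\DV{Y}\Phi)X)$. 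Subtracting, the terms involving $\DV{X}\Phi$ and $\DV{Y}\Phi$ cancel, leaving $g_F(\Phi X,Y)-g_F(X,\Phi Y)$ after one use of the symmetry of $g_F$. The main obstacle I anticipate is the converse in the first part, because of the two-stage homotopy and the essential role of the total symmetry of $V_kg_{ij}$; the remaining verifications are essentially bookkeeping.
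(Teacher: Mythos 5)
Your proposal is correct and follows essentially the same route as the paper: the identity (\ref{Phi-g}) is obtained by exactly the same expansion of $\dv{}i_\Phi\theta_F$ via (\ref{dv1form}), and the symmetry statements amount to the coordinate version of the paper's intrinsic argument (which uses $\dv{}\dv{}F=0$ and the commutator $[\DV{X},\DV{Y}]$). The only difference is that you spell out, via the two-stage fibre homotopy, the converse which the paper dismisses as obvious from the coordinate form of the hypotheses; your explicit construction is a correct elaboration of that same idea.
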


\begin{proof}
The symmetry of $g_F$ follows directly from
$\dv{}\theta_F=\dv{}\dv{}F=0$. The symmetry of $\DV{}\!g_F$ can
easily be shown by taking a further vertical covariant derivative of
the defining relation of $g_F$ and using the commutator property
\begin{equation}
[\DV{X},\DV{Y}] = \DV{\DV{X}Y} - \DV{\DV{Y}X}. \label{DVDV}
\end{equation}
The converse statement is obvious from the coordinate representation
of the assumptions.  Finally, making use of (\ref{dv1form}) we have
\begin{align*}
\dv{}i_\Phi\theta_F (X,Y) &= \DV{X}(\Phi(\theta_F))(Y)
- \DV{Y}(\Phi(\theta_F))(X)  \\
&= \langle \DV{X}\Phi(Y) - \DV{Y}\Phi(X),\theta_F\rangle +
\DV{X}\theta_F(\Phi Y)
- \DV{Y}\theta_F(\Phi X) \\
&= i_{\dv{}\Phi}\theta_F(X,Y) + g_F(X,\Phi Y) - g_F(Y,\Phi X),
\end{align*}
from which the last statement follows.
\end{proof}

We are now ready to state and prove the first main theorem, which
provides the transition of the single condition (\ref{thetaLD2}) to
equivalent conditions involving a multiplier $g$, in precisely the
same way as (\ref{thetaL2}) relates to (\ref{Helmholtz2}).

\begin{thm} \label{thm1}
The second-order field $\Gamma$ represents a dissipative system of
type (\ref{dissip1}) if and only if there exists a function $D$ and
a symmetric type $(0,2)$ tensor $g$ along $\tau$ such that
$\DV{}\!g$ is symmetric and $g$ and $D$ further satisfy
\begin{align}
\del g &= \DV{}\DV{}D, \label{delgD} \\
\Phi\hook g - (\Phi\hook g)^T &= \dv{}\dh{}D, \label{Phi-gD}
\end{align}
where $\Phi$ is the Jacobi endomorphism of $\Gamma$.
\end{thm}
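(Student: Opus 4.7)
The plan is to combine Proposition~\ref{prop1}, Lemma~\ref{lemma1}, and the Corollary. Proposition~\ref{prop1} recasts the problem as the existence of a regular $L$ and a function $D$ satisfying
\[
\del\theta_L = \dh{}L + \dv{}D, \qquad (\star)
\]
while Lemma~\ref{lemma1} identifies $g := g_L = \DV{}\theta_L$ as the natural multiplier and automatically supplies the required symmetries of $g$ and $\DV{}g$. So the task reduces to translating $(\star)$ into the pair of multiplier conditions (\ref{delgD})--(\ref{Phi-gD}), and this will generalize the classical argument in which $\del g = 0$ and $g\Phi$ symmetric arise, term for term, from $\del\theta_L = \dh{}L$.

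\emph{Forward direction.} Given $L,D$ satisfying $(\star)$, set $g := g_L$. Apply the vertical covariant differential $\DV{}$ to $(\star)$ and evaluate on a test vector $Y$. Using the commutator $[\del,\DV{X}] = \DV{\del X} - \DH{X}$ (a short coordinate check starting from the formulas for $\del$ and $\DV{}$), the left-hand side rearranges as $(\del g_L)(X,Y) + \DH{X}\theta_L(Y)$; on the right, $\DV{X}\dv{}D(Y) = g_D(X,Y) = (\DV{}\DV{}D)(X,Y)$, and the zero-torsion relation $V_i(\Gamma^k_j) = V_j(\Gamma^k_i)$ yields $\DH{X}\theta_L(Y) - \DV{X}\dh{}L(Y) = \dh{}\theta_L(X,Y)$, which is skew in $X,Y$. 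The net identity reads
\[
(\del g_L)(X,Y) + \dh{}\theta_L(X,Y) = g_D(X,Y);
\]
its symmetric part delivers (\ref{delgD}), and its skew part simultaneously reproduces the Corollary's $\dh{}\theta_L=0$. For (\ref{Phi-gD}), apply identity (\ref{Phi-g}) of Lemma~\ref{lemma1} to $F=L$, substitute $\dv{}\Phi=3R$ so that $i_{\dv{}\Phi}\theta_L = 3i_R\theta_L = 3\dh{}\dh{}L$ (via $\dh{}\dh{}=\dv{R}$), and then apply $\dh{}$ to $(\star)$; combined with $\dh{}\theta_L=0$ and the commutator of $\dh{}$ with $\del$ on $\theta_L$, the right-hand side of (\ref{Phi-g}) collapses to $\dv{}\dh{}D$.

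\emph{Converse direction.} Given $g$ and $D$ as in the hypotheses, Lemma~\ref{lemma1} produces a function $L$, unique up to an affine-in-velocity term that does not alter $g_L$, with $g_L = g$. By Proposition~\ref{prop1} together with the Corollary it is enough to show that $\dh{}\theta_L=0$: then some $D'$ exists with $\del\theta_L = \dh{}L + \dv{}D'$, and $\Gamma$ is of type (\ref{dissip1}). The identity of the forward direction, being valid for every $L$, reads
\[
\DV{X}\bigl(\del\theta_L - \dh{}L - \dv{}D\bigr)(Y) = (\del g_L - g_D)(X,Y) + \dh{}\theta_L(X,Y);
\]
hypothesis (\ref{delgD}) kills the first term. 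Inserting hypothesis (\ref{Phi-gD}) into (\ref{Phi-g}) for $L$ and using $\dv{}\Phi=3R$, $\dh{}\dh{}=\dv{R}$ yields $3\dh{}\dh{}L = \dv{}\bigl(i_\Phi\theta_L + \dh{}D\bigr)$; feeding this through the commutator of $\dh{}$ with $\del$ applied to $\theta_L$ forces $\dh{}\theta_L=0$, so the Corollary concludes.

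\emph{Main obstacle.} The delicate step is the converse, specifically extracting $\dh{}\theta_L=0$ from hypothesis (\ref{Phi-gD}). This calls for careful manipulation of the commutators among $\del$, $\dh{}$, $\dv{}$ and the curvature identities $\dv{}\Phi=3R$ and $\dh{}\dh{}=\dv{R}$; it is precisely the matching between the skew-symmetric structure encoded in (\ref{Phi-gD}) and the algebraic content of (\ref{Phi-g}) for $L$ that makes this extraction possible.
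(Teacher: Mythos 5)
Your necessity argument is essentially correct and follows the paper's route: the identity $(\del g_L)(X,Y)+\dh{}\theta_L(X,Y)=g_D(X,Y)$, split into its symmetric and skew parts, gives (\ref{delgD}) and $\dh{}\theta_L=0$, and applying $\dh{}$ to $(\star)$ with the commutators $[\del,\dh{}]=2i_R+\dv{\Phi}$, $\dh{}\dh{}=\dv{R}$ and $\dv{}\Phi=3R$ gives (\ref{Phi-D}), hence (\ref{Phi-gD}) via Lemma~\ref{lemma1}.

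The converse, however, contains a genuine gap, and it is precisely the step you flag as the main obstacle. You take the generator handed to you by Lemma~\ref{lemma1} (any $F$ with $\DV{}\DV{}F=g$), call it $L$, and claim the hypotheses force $\dh{}\theta_L=0$. They cannot: all four hypotheses --- symmetry of $g$ and of $\DV{}g$, (\ref{delgD}), (\ref{Phi-gD}) --- are conditions on the pair $(g,D)$ alone, and $g$ is unchanged when the generator is replaced by $F+i_\T\alpha+f$ with $\alpha$ a basic 1-form and $f$ basic, whereas $\dh{}\theta_F$ then changes by $\dh{}\alpha=d\alpha$. Concretely, start from any genuine dissipative pair (say the free particle with $D=0$) and a non-closed basic $\alpha$: the generator $F=L_0+i_\T\alpha$ satisfies all your hypotheses with the same $g$, yet $\dh{}\theta_F=d\alpha\neq 0$. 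So no amount of ``feeding this through the commutator of $\dh{}$ with $\del$'' can produce $\dh{}\theta_L=0$ for an arbitrary generator; indeed your relation $3\,\dh{}\dh{}L=\dv{}(i_\Phi\theta_L+\dh{}D)$ only constrains a $\dv{}$-derivative and is compatible with $\dh{}\theta_L\neq 0$. What the hypotheses actually yield, and what the paper proves, is weaker and requires the work you omit: from the symmetry of $\del\DV{}g$ and of $\DV{}\del g=\DV{}\DV{}\DV{}D$ together with $[\del,\DV{}]=-\DH{}$ one gets that $\DH{}g$ is symmetric, and then via (\ref{DHDV}) that $\dh{}\theta_F$ is a \emph{basic} 2-form; hypothesis (\ref{Phi-gD}) (equivalently (\ref{Phi-D})), through $\dh{}\dh{}=\dv{R}$ and $\dv{}\Phi=3R$, then shows this basic form is \emph{closed}. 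Only after exploiting the affine freedom --- replacing $F$ by $L=F-i_\T\alpha$ where locally $\dh{}\theta_F=\dh{}\alpha$ --- do you obtain a generator with $\dh{}\theta_L=0$, at which point your appeal to the Corollary and Proposition~\ref{prop1} would indeed close the argument (the paper goes slightly further, showing also that the basic 1-form $\beta=\del\theta_F-\dh{}F-i_\T\dh{}\theta_F-\dv{}D$ is closed, so that the \emph{same} $D$ works in (\ref{thetaLD2})). As written, your converse asserts the conclusion of this construction without performing it.
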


\begin{proof}
Suppose $\Gamma$ represents a system of type (\ref{dissip1}). Then
we know there exist functions $L$ and $D$ such that (\ref{thetaLD2})
and (\ref{dhthetaL}) hold true. Define $g=\DV{}\DV{}L$ or
equivalently $g(X,Y)=\DV{X}\theta_L(Y)$. Obviously, $g$ and
$\DV{}\!g$ are symmetric by construction. Acting with $\del$ on $g$
and using the commutator property $[\del,\DV{}]=-\DH{}$, we get for
a start
\begin{align*}
\del g &= \DV{}\del\DV{}L - \DH{}\DV{}L \\
&= \DV{}\DV{}D + \DV{}\DH{}L - \DH{}\DV{}L,
\end{align*}
where the last line follows from (\ref{thetaLD2}). Now the
commutator of vertical and horizontal covariant differentials (see
\cite{MaCaSaII} or \cite{CSMBP}) is such that, at least on
functions, $\DV{}\DH{}L(X,Y)=\DH{}\DV{}L(Y,X)$. But
\[
\DH{}\DV{}L(Y,X) - \DH{}\DV{}L(X,Y) = \dh{}\theta_L(Y,X) =0,
\]
in view of (\ref{dhthetaL}), so that (\ref{delgD}) follows. When
acting finally with $\dh{}$ on (\ref{thetaLD2}), we have to appeal
to the formula for $\dh{}\dh{}$ in (\ref{commutators}) and further
need the commutator of $\del$ and $\dh{}$, which for the action on
the module $\forms{}{\tau}$ of scalar forms is given by
\begin{equation}
{}[\del,\dh{}] = 2\,i_R + \dv{\Phi}. \label{deldh}
\end{equation}
It is then straightforward to check, using (\ref{dhthetaL}) and the
first of (\ref{Phi-R}), that we get
\begin{equation}
\dv{}i_\Phi\theta_L - i_{\dv{}\Phi}\theta_L = \dh{}\dv{}D, \label{Phi-D}
\end{equation}
from which (\ref{Phi-gD}) follows in view of the last statement in
Lemma~\ref{lemma1}.

Conversely, assume that $g$ and $D$ satisfy the four conditions
stated in the theorem. It follows from the symmetry of $g$ and
$\DV{}g$ that $g$ is a Hessian: $g=\DV{}\DV{}F$ say. The function
$F$ of course is not unique and the idea is to take advantage of the
freedom in $F$ to construct an $L$ which will have the desired
properties. This is not so difficult to do by a coordinate analysis.
Keeping the computations intrinsic is a bit more technical, but will
give us an opportunity to recall a few more features of interest of
the calculus of forms along $\tau$. Observe first that
$\nabla\DV{}g$ is obviously symmetric, and that the same is true
for $\DV{}\del g=\DV{}\DV{}\DV{}D$. It follows from
$[\del,\DV{}]=-\DH{}$ that $\DH{}g$ is also symmetric. Hence
\[
\DH{}g(X,Y,Z)=\DH{}\DV{}\theta_F(X,Y,Z)=\DH{}\DV{}\theta_F(X,Z,Y).
\]
If we interchange $\DH{}$ and $\DV{}$ in the last term, there is an
extra term to take into account (since the action is on a 1-form this time,
not a function). Indeed, we have
\begin{equation}
\DH{}\DV{}\theta_F(X,Z,Y)= \DV{}\DH{}\theta_F(Z,X,Y) +
\theta_F(\theta(X,Z)Y). \label{DHDV}
\end{equation}
Here $\theta$ is a type $(1,3)$ tensor along $\tau$ which is
completely symmetric (and could in fact be defined by the above
relation): its components in a coordinate basis are
$\theta^k_{jml}=V_mV_l(\Gamma^k_j)$. Using the above two relations,
expressing the symmetry of $\DH{}g$ in its first two arguments now
leads to
\[
0 = \DV{}\DH{}\theta_F(Z,X,Y) - \DV{}\DH{}\theta_F(Z,Y,X) =
\DV{Z}(\dh{}\theta_F)(X,Y).
\]
This says that $\dh{}\theta_F$ is a basic 2-form, i.e.\ a 2-form on
the base manifold $Q$. On the other hand, we have
\[
\DV{}\DV{}D = \del g=\del\DV{}\DV{}F = \DV{}\del\theta_F - \DH{}\DV{}F
=\DV{}(\del \theta_F - \dh{}F) - \dh{}\theta_F,
\]
where we have used the property $\DH{}\DV{}F(X,Y)=\DV{}\DH{}F(Y,X)$
again in the transition to the last expression. But since $\dh{}\theta_F$
is basic, we can write it as $\DV{}i_\T\dh{}\theta_F$, where $\T$ is
the canonical vector field along $\tau$ (the identity map on $TQ$),
which in coordinates reads
\begin{equation}
\T= v^i\fpd{}{q^i}. \label{bigT}
\end{equation}
It follows that we can write the last relation in the form
\[
\DV{}\beta:=\DV{}(\del\theta_F-\dh{}F - i_\T\dh{}\theta_F -
\DV{}D)=0,
\]
which defines another basic form $\beta$. We next want to prove that
the basic forms $\beta$ and $\dh{}\theta_F$ are actually closed in
view of the final assumption (\ref{Phi-gD}) or equivalently
(\ref{Phi-D}), which has not been used so far. Keeping in mind that
$\dh{}$ is the same as the ordinary exterior derivative for the
action on basic forms, we easily find with the help of
(\ref{commutators}) that
\[
\dh{}\dh{}\theta_F = \onethird \dv{}i_{\dv{}\Phi}\theta_F =
\onethird\dv{}(\dv{}i_\Phi\theta_F - \dv{}\dh{}D) =0.
\]
Secondly, using also (\ref{deldh}),
\begin{align*}
\dh{}\beta &=\dh{}\del\theta_F - \dh{}\dh{}F - \dh{}i_\T\dh{}\theta_F - \dh{}\dv{}D \\
&= \del\dh{}\theta_F - 2i_R\theta_F - \dv{\Phi}\theta_F - i_R\dv{}F - \dh{\T}\dh{}\theta_F - \dh{}\dv{}D \\
&= \del\dh{}\theta_F - i_{\dv{}\Phi}\theta_F + \dv{}i_\Phi\theta_F - \dh{\T}\dh{}\theta_F - \dh{}\dv{}D \\
&=\del\dh{}\theta_F - \dh{\T}\dh{}\theta_F.
\end{align*}
But this is zero also because the operators $\del$ and $\dh{\T}$
coincide when they are acting on basic (scalar) forms. It follows
that, locally, $\dh{}\theta_F = \dh{}\alpha$ and $\beta=\dh{}f$, for
some basic 1-form $\alpha$ and basic function $f$. The defining
relation for $\beta$ then further implies that
\[
\dv{}D = \del(\theta_F - \alpha) - \dh{}(F-i_\T\alpha + f).
\]
Putting $L=F-i_\T\alpha + f$, the difference between $L$ and $F$ is
an affine function of the velocities, so both functions have the
same Hessian $g$, and also $\theta_L = \DV{}L = \theta_F - \alpha$.
It now readily follows that the relation (\ref{thetaLD2}) holds
true, which concludes our proof in view of Proposition~\ref{prop1}.
\end{proof}

It is worthwhile listing the coordinate expressions for the
necessary and sufficient conditions of Theorem~\ref{thm1}. They call
for a (non-singular) symmetric matrix $g_{ij}(q,v)$ and a function
$D(q,v)$ such that
\begin{align}
V_k(g_{ij}) &= V_j(g_{ik}) \label{HD1} \\
\Gamma(g_{ij}) - g_{ik}\Gamma^k_j - g_{jk}\Gamma^k_i &= V_iV_j(D) \label{HD2} \\
g_{ik}\Phi^k_j - g_{jk}\Phi^k_i &= H_iV_j(D)-H_jV_i(D).
\label{HD3}
\end{align}
The classical Helmholtz conditions for the multiplier  are
recovered when we put $D=0$, of course.

Let us now turn to the case of forces of gyroscopic type as in
(\ref{dissip2}).

\begin{prop} \label{prop2} The second-order field $\Gamma$ represents
a gyroscopic system of type (\ref{dissip2}) if and only if there is a
regular function $L\in\cinfty{TQ}$ and a basic 2-form
$\omega\in\forms{2}{Q}$ such that
\begin{equation}
\del\theta_L = \dh{}L + i_\T \omega. \label{thetaLomega}
\end{equation}
\end{prop}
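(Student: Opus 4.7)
The plan is to mirror the proof of Proposition~\ref{prop1} almost verbatim, once the correct geometric representative of the gyroscopic force term in (\ref{dissip2}) has been identified.

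First, I would write the global, coordinate-free version of (\ref{dissip2}) as
\[
\lie{\Gamma}(S(dL)) = dL + \H{(i_\T\omega)},
\]
where $\omega = \onehalf\omega_{ij}(q)\,dq^i\wedge dq^j$ is the basic 2-form on $Q$ determined by the skew coefficients $\omega_{ij}$, and $\T$ is the canonical vector field along $\tau$ from (\ref{bigT}). A direct computation using the antisymmetry of $\omega_{ij}$ gives $i_\T\omega = \omega_{ki}v^k\,dq^i$, and its horizontal lift as a semi-basic 1-form on $TQ$ is exactly the right-hand side of (\ref{dissip2}). This plays the role occupied in Proposition~\ref{prop1} by $S(dD)=\H{(\dv{}D)}$.

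Using $S(dL)=\H\theta_L$ with $\theta_L=\dv{}L$, and applying (\ref{liealphah}) to the left-hand side together with (\ref{dL}) to the first term on the right, the displayed equation becomes
\[
\V{(\dv{}L)} + \H{(\del\theta_L)} = \H{(\dh{}L)} + \V{(\dv{}L)} + \H{(i_\T\omega)}.
\]
The vertical parts cancel and equating the horizontal lifts yields exactly (\ref{thetaLomega}). The converse direction is just this computation run in reverse: from $\del\theta_L = \dh{}L + i_\T\omega$ one reassembles the horizontal and vertical parts, reads the equation in coordinates, and recovers (\ref{dissip2}).

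There is no real obstacle here; the argument is structurally identical to Proposition~\ref{prop1}, with $\dv{}D$ replaced by $i_\T\omega$. The only point worth keeping in mind is that, by hypothesis, $\omega\in\forms{2}{Q}$ is \emph{basic}, so $i_\T\omega$ is unambiguously a 1-form along $\tau$ and its horizontal lift to $TQ$ is semi-basic and matches the gyroscopic force term. The skew-symmetry of $\omega_{ij}$ is encoded automatically by $\omega$ being a 2-form, so no separate algebraic condition needs to be imposed.
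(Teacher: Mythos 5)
Your argument is correct and is exactly what the paper intends: its proof of Proposition~\ref{prop2} simply says the result follows ``by a simple coordinate calculation or an argument like that in Proposition~\ref{prop1}'', and you have carried out the latter, with $S(dD)=\H{(\dv{}D)}$ replaced by $\H{(i_\T\omega)}$ and the identity $i_\T\omega=\omega_{ki}v^k\,dq^i$ supplying the coordinate match with the gyroscopic force term. Nothing is missing; the use of (\ref{liealphah}) and (\ref{dL}) and the cancellation of vertical parts is precisely the intended route.
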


\begin{proof}
The proof is straightforward, by a simple coordinate calculation or an
argument like that in Proposition~\ref{prop1}.
\end{proof}

As a preliminary remark: it is easy to verify in coordinates that
for a basic 2-form $\omega$, we have
\begin{equation}
\DV{}i_\T\omega = \omega, \qquad \dv{}i_\T\omega = 2\,\omega, \qquad
\dv{}i_\T\dh{}\omega = 3\,\dh{}\omega. \label{aux}
\end{equation}
It follows by taking a vertical exterior derivative of
(\ref{thetaLomega}) that this time $\dh{}\theta_L$ will not vanish
but must be basic, specifically we must have
\begin{equation}
\dh{}\theta_L = \omega. \label{dhthetaLomega}
\end{equation}

\begin{thm} \label{thm2}
The second-order field $\Gamma$ represents a gyroscopic system of
type (\ref{dissip2}) if and only if there exists a basic 2-form
$\omega\in\forms{2}{Q}$ and a symmetric type $(0,2)$ tensor $g$
along $\tau$ such that $\DV{}\!g$ is symmetric and $g$ and $\omega$
further satisfy
\begin{align}
\del g &= 0, \label{delgomega} \\
\Phi\hook g - (\Phi\hook g)^T &= i_\T\dh{}\omega,
\label{Phi-gomega}
\end{align}
where $\Phi$ is the Jacobi endomorphism of $\Gamma$.
\end{thm}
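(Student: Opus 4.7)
The plan is to mirror the proof of Theorem~\ref{thm1}, with the basic 2-form $\omega$ taking over the role previously played by the dissipation function $D$.

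For necessity, Proposition~\ref{prop2} supplies a regular $L$ with $\del\theta_L = \dh{}L + i_\T\omega$, and (\ref{dhthetaLomega}) records $\dh{}\theta_L = \omega$. I would set $g := \DV{}\DV{}L$; symmetry of $g$ and of $\DV{}\!g$ is immediate from Lemma~\ref{lemma1}. For $\del g = 0$, I would apply $[\del,\DV{}] = -\DH{}$ to (\ref{thetaLomega}), noting $\DV{}i_\T\omega = \omega$ from (\ref{aux}); the antisymmetric part of $\DV{}\DH{}L - \DH{}\DV{}L$ equals $-\dh{}\theta_L = -\omega$ via the identity $\DV{}\DH{}L(X,Y) = \DH{}\DV{}L(Y,X)$ used already in Theorem~\ref{thm1}, and this exactly cancels the $\omega$-contribution. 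For (\ref{Phi-gomega}), I would act with $\dh{}$ on (\ref{thetaLomega}) and run the same manipulation as in Theorem~\ref{thm1}, using $[\del,\dh{}] = 2i_R + \dv{\Phi}$, $\dh{}\dh{}L = i_R\theta_L$ and $\dv{}\Phi = 3R$; the right-hand side $\del\omega - \dh{}i_\T\omega$ collapses to $i_\T\dh{}\omega$ because $\del$ agrees with $\dh{\T} = i_\T\dh{} + \dh{}i_\T$ on basic forms, and Lemma~\ref{lemma1} then delivers (\ref{Phi-gomega}).

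For sufficiency, the converse in Lemma~\ref{lemma1} gives $g = \DV{}\DV{}F$ for some $F$, and the goal is to adjust $F$ by an affine-in-velocities function so that the hypothesis of Proposition~\ref{prop2} is met. Following Theorem~\ref{thm1}, the condition $\del g = 0$ combined with the symmetry of $\DV{}\!g$ forces $\DH{}g = -\nabla\DV{}g$ to be a fully symmetric $(0,3)$ tensor; symmetry in the first two arguments together with (\ref{DHDV}) then forces $\DV{}\dh{}\theta_F = 0$, so $\dh{}\theta_F$ is basic. Setting $\beta := \del\theta_F - \dh{}F - i_\T\dh{}\theta_F$ and exploiting $\DH{}\DV{}F - \DV{}\dh{}F = \dh{}\theta_F$, one finds $\DV{}\beta = 0$, so $\beta$ is basic as well. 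The new cohomological input comes from (\ref{Phi-gomega}): rewriting it via Lemma~\ref{lemma1} as $\dv{}i_\Phi\theta_F - i_{\dv{}\Phi}\theta_F = -i_\T\dh{}\omega$ and applying $\dv{}$, using $\dv{}i_\T\dh{}\omega = 3\dh{}\omega$ from (\ref{aux}) together with $\dh{}\dh{}\theta_F = \onethird\dv{}i_{\dv{}\Phi}\theta_F$, yields $\dh{}\dh{}\theta_F = \dh{}\omega$. Hence the basic 2-form $\dh{}\theta_F - \omega$ is $\dh{}$-closed, so locally $\dh{}\theta_F = \omega + \dh{}\alpha$ for a basic 1-form $\alpha$. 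A parallel expansion of $\dh{}\beta$ using $[\del,\dh{}] = 2i_R + \dv{\Phi}$ and the Cartan formula for $\dh{\T}$ collapses to $\dh{}\beta = i_\T\dh{}\dh{}\theta_F - i_\T\dh{}\omega = 0$, so $\beta = \dh{}f$ for a basic function $f$.

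I would finally set $L := F - i_\T\alpha + f$. Then $L-F$ is affine in the velocities, so $g_L = g$, and $\DV{}i_\T\alpha = \alpha$ yields $\theta_L = \theta_F - \alpha$. Substituting these into the two relations just established and using $\del\alpha = i_\T\dh{}\alpha + \dh{}i_\T\alpha$—which holds because $\del$ restricts to $\dh{\T}$ on the basic 1-form $\alpha$—the required identity $\del\theta_L = \dh{}L + i_\T\omega$ drops out, and Proposition~\ref{prop2} closes the argument. The principal obstacle is a careful cohomological bookkeeping: unlike the dissipative case where $\dh{}\theta_L$ had to vanish, here $\dh{}\theta_F$ and $\omega$ are only forced to agree modulo an exact basic 1-form, so one must simultaneously track the $\omega$-correction in $\beta$ and in $\dh{}\theta_F - \omega$ and verify that both remain $\dh{}$-closed in the presence of the extra $i_\T\omega$ term.
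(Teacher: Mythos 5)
Your proof is correct and takes essentially the same route as the paper's: define $g=\DV{}\DV{}L$ and use $[\del,\DV{}]=-\DH{}$ and $[\del,\dh{}]=2\,i_R+\dv{\Phi}$ for necessity, then for sufficiency show $\dh{}\theta_F$ and $\beta=\del\theta_F-\dh{}F-i_\T\dh{}\theta_F$ are basic, deduce $\dh{}\dh{}\theta_F=\dh{}\omega$ and $\dh{}\beta=0$ from the $\Phi$-condition, and correct $F$ by $-i_\T\alpha+f$. Your explicit check that $\dh{}\beta=i_\T\dh{}\dh{}\theta_F-i_\T\dh{}\omega=0$ merely fills in the step the paper summarizes as ``leads as before''.
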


\begin{proof}
Assuming we are in the situation described by
Proposition~\ref{prop2}, we define $g$ as before by $g=\DV{}\DV{}L$,
or $g(X,Y)=\DV{X}\theta_L(Y)=\DV{Y}\theta_L(X)$, from which the
usual symmetry of $\DV{}\!g$ follows. Acting with $\nabla$ on $g$
and following the pattern of the proof of Theorem~\ref{thm1}, we get
$\del g (X,Y) =\dh{}\theta_L(Y,X) + \DV{}i_\T\omega(X,Y)$, which is
zero in view of (\ref{aux}) and (\ref{dhthetaLomega}). Finally, for
the horizontal exterior derivative of (\ref{thetaLomega}), the
modifications are that the left-hand side produces a term
$\del\omega$ in view of (\ref{dhthetaLomega}), while the second term
on the right gives $\dh{}i_\T\omega = \dh{\T}\omega -
i_\T\dh{}\omega$, and since $\del=\dh{\T}$ on basic forms we end up
with the relation
\[
i_{\dv{}\Phi}\theta_L - \dv{}i_\Phi\theta_L = i_\T\dh{}\omega,
\]
which is the desired result (\ref{Phi-gomega}) in view of
Lemma~\ref{lemma1}.

For the sufficiency, we observe as before that $g$ is a Hessian, say
$g=\DV{}\DV{}F$, and that also $\DH{}g$ will be symmetric, which in
exactly the same way implies that $\dh{}\theta_F$ is basic. Still
following the pattern of Theorem~\ref{thm1}, $\del g=0$ will now
imply that $\beta:=\del\theta_F - \dh{}F - i_\T\dh{}\theta_F$ is a
basic 1-form. In computing $\dh{}\dh{}\theta_F$, the modification is
that
\[
\dh{}\dh{}\theta_F = \onethird \dv{}i_{\dv{}\Phi}\theta_F =
\onethird\dv{}(\dv{}i_\Phi\theta_F + i_\T\dh{}\omega) = \dh{}\omega,
\]
in view of the last of (\ref{aux}). Since $\dh{}\theta_F$ and
$\omega$ are basic, this expresses that their difference is closed
and thus locally exact: $\dh{}\theta_F= \omega + \dh{}\alpha$ for
some basic 1-form $\alpha$. The computation of $\dh{}\beta$ leads as
before to the conclusion that $\beta$ is closed, thus locally
$\beta=\dh{}f$ for some function $f$ on $Q$. Using this double
information, we find that
\[
i_\T\dh{}\theta_F = i_\T\omega + \del\alpha - \dh{}i_\T\alpha,
\]
and subsequently
\begin{align*}
0 &= \del \theta_F - \dh{}F - i_\T\dh{}\theta_F - \dh{}f \\
&= \del(\theta_F - \alpha) - \dh{}(F-i_\T\alpha + f) - i_\T\omega.
\end{align*}
This is a relation of type (\ref{thetaLomega}), with $L=F-i_\T\alpha +
f$, which concludes the proof.
\end{proof}

In coordinates, in comparison with the dissipative case of
Theorem~\ref{thm1}, the conditions (\ref{HD2}) and (\ref{HD3}) are
replaced in the gyroscopic case by
\begin{align}
\Gamma(g_{ij}) &= g_{ik}\Gamma^k_j + g_{jk}\Gamma^k_i  \label{Hg2} \\
g_{ik}\Phi^k_j - g_{jk}\Phi^k_i &= \onehalf
\left(\fpd{\omega_{ij}}{q^k} + \fpd{\omega_{jk}}{q^i} +
\fpd{\omega_{ki}}{q^j}\right)v^k \label{Hg3}
\end{align}
with $\omega_{ij}(q)=-\omega_{ji}(q)$.

Remark: when $d\omega=0$, the conditions of Theorem~\ref{thm2}
reduce to the standard Helmholtz conditions for a multiplier $g$.
This should not come as a surprise, since the local exactness of
$\omega$ then implies that the gyroscopic forces are actually of the
type of the Lorentz force of a magnetic field, for which it is known
that a generalized potential can be introduced to arrive at a
standard Lagrangian representation.

It is worth noting that in the sufficiency part of the proof the
condition $\del g=0$, given that $g$ and $\DV{}\!g$ are symmetric, is
used to show the existence of a basic 1-form $\beta$ such that
$\del\theta_F = \dh{}F + i_\T\dh{}\theta_F+\beta$, where
$\dh{}\theta_F$ is a basic 2-form.  The condition involving $\Phi$
then has the role of ensuring that $F$ can be modified by the addition
of a function affine in the fibre coordinates so as to eliminate the
$\beta$ term.  This suggests that it might be interesting to examine
the effect of ignoring the $\Phi$ condition.  When we do so we obtain
the following result.

\begin{prop} \label{prop2a}
For a given second-order field $\Gamma$, the existence of a
non-singulsr symmetric type $(0,2)$ tensor $g$ along $\tau$ such that
$\DV{}\!g$ is symmetric and $\del g=0$ is necessary and sufficient for
there to be a regular function $L$, a basic 1-form $\beta$ and a basic
2-form $\omega$ such that $\del\theta_L = \dh{}L + i_\T\omega+\beta$,
that is to say, such that the equations
\[
\frac{d}{dt}\left(\fpd{L}{\dot{q}^i}\right) - \fpd{L}{q^i} =
\omega_{ki}(q)\dot{q}^k+\beta_i(q), \quad \omega_{ki}= - \omega_{ik}.
\]
are equivalent to those determined by $\Gamma$.
\end{prop}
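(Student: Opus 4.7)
The plan is to mimic the proofs of Theorems~\ref{thm1} and~\ref{thm2}, but without ever invoking the $\Phi$-condition; that condition was precisely the ingredient responsible for eliminating the basic 1-form which here must be allowed to survive as $\beta$.

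For the necessity direction, I would start from $L$, $\beta$, $\omega$ satisfying $\del\theta_L = \dh{}L + i_\T\omega + \beta$ and set $g:=\DV{}\DV{}L$, which is automatically symmetric with $\DV{}\!g$ symmetric by Lemma~\ref{lemma1}. Applying $\dv{}$ to the hypothesis and using $\dv{}\theta_L=0$, $[\del,\dv{}]=-\dh{}$, the second identity in (\ref{aux}), and the vanishing of $\dv{}\beta$ (since $\beta$ is basic), one recovers $\dh{}\theta_L=\omega$, the analogue of (\ref{dhthetaLomega}). The computation of $\del g$ then proceeds exactly as in Theorem~\ref{thm2}: using $[\del,\DV{}]=-\DH{}$, the commutator identity $\DV{}\dh{}L(X,Y) = \DH{}\DV{}L(Y,X)$ on functions, and the first identity in (\ref{aux}), one obtains
\[
\del g(X,Y) = \dh{}\theta_L(Y,X) + \omega(X,Y) + \DV{}\beta(X,Y),
\]
in which all three terms are seen to vanish (the first two by the already-established equality $\dh{}\theta_L=\omega$ together with the skew-symmetry of $\omega$, the third because $\beta$ is basic).

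For sufficiency, Lemma~\ref{lemma1} supplies an $F$ with $g=\DV{}\DV{}F$, and then the symmetry of $\DH{}g$ (consequence of $\del g=0$ through $[\del,\DV{}]=-\DH{}$) together with the argument from Theorem~\ref{thm1}, based on interchanging $\DH{}$ and $\DV{}$ acting on $\theta_F$ via the completely symmetric tensor $\theta^k_{jml}$, shows that $\dh{}\theta_F$ is basic. The central identity $0 = \del g = \DV{}(\del\theta_F - \dh{}F) - \dh{}\theta_F$ already established in the proof of Theorem~\ref{thm1}, combined with $\dh{}\theta_F = \DV{}i_\T\dh{}\theta_F$ from (\ref{aux}), gives
\[
\DV{}\bigl(\del\theta_F - \dh{}F - i_\T\dh{}\theta_F\bigr) = 0,
\]
so that $\beta := \del\theta_F - \dh{}F - i_\T\dh{}\theta_F$ is basic. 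Setting $L:=F$, which is regular because its Hessian $g$ is non-singular, and $\omega:=\dh{}\theta_F$ immediately delivers the required decomposition.

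Since the argument is structurally a strict subset of the reasoning in Theorems~\ref{thm1} and~\ref{thm2}, no serious obstacle arises; the only conceptual point is to recognize that the basic 1-form $\beta$, which in the sufficiency part of Theorem~\ref{thm2} had to be written as $\dh{}f$ and absorbed into $L$ by the modification $L = F - i_\T\alpha + f$, is precisely what remains visible once the $\Phi$-condition is dropped.
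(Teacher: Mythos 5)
Your argument is correct and is essentially the paper's own proof: the necessity direction reproduces the Theorem~\ref{thm2} computation with the extra term killed by $\DV{}\beta=0$ (so that $\dh{}\theta_L=\omega$ and $\del g=0$ still follow), and the sufficiency direction is exactly the first part of the sufficiency proof of Theorem~\ref{thm2}, stopping before the $\Phi$-condition is used to absorb $\beta$. Only a wording quibble: in your $\del g$ computation the first two terms cancel each other (via $\dh{}\theta_L=\omega$ and skew-symmetry) rather than vanishing separately, but the reasoning you give is the right one.
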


\begin{proof}
It remains to show that $\del g=0$ still holds when $\del\theta_L =
\dh{}L + i_\T\omega+\beta$. Since $\beta$ is basic,
$\DV{}\beta=0$, from which it follows easily that both of the
formulas  $\dh{}\theta_L = \omega$ and
$\del g (X,Y) =\dh{}\theta_L(Y,X) + \DV{}i_\T\omega(X,Y)$ continue to
hold, so that $\del g=0$ as before.
\end{proof}

One point of interest about this result is that it concerns a subset
of the full Helmholtz conditions.  Unlike Theorems 1 and 2 above, but
like the full Helmholtz conditions, it involves conditions on the
multiplier only, and in this respect it anticipates the results to be
found in the following section.

An analogous result has been obtained by different methods in \cite{olgageoff} (see Proposition~3.13 and the
immediately following remarks). This is the partial result that we
mentioned in the introduction.

\section{Reduction to conditions on the multiplier only}

We have seen in the previous section that Theorems 1 and 2 produce the
direct analogues of the Helmholtz conditions (\ref{Helmholtz2}) of the
standard inverse problem of Lagrangian mechanics.  It is quite natural
that the extra elements in our analysis, namely the function $D$,
respectively the 2-form $\omega$, make their appearance in these
covering generalizations.  Quite surprisingly, however, one can go a
step further in the generalizations and eliminate the dependence on
$D$ or $\omega$ all together, to arrive at necessary and sufficient
conditions involving the multiplier $g$ only.  This is what we will
derive now, but it is a rather technical issue, for which we will
therefore prepare the stage by proving a number of auxiliary results
first.  We recall that, as in the relation (\ref{dv2form}), a notation
like $\sum_{X,Y,Z}$ in what follows always refers to a cyclic sum
over the indicated arguments.

\begin{lem} \label{lemma2}
If $F\in\cinfty{TQ}$, $\theta_F = \dv{}F$ and $g=\DV{}\DV{}F$ then
\begin{align}
\dv{R}\theta_F(X,Y,Z) &= \sum_{X,Y,Z}g(R(X,Y),Z), \label{dvR} \\
\dh{R}\theta_F(X,Y,Z) &= \sum_{X,Y,Z}\DH{R(X,Y)}\theta_F(Z).
\label{dhR}
\end{align}
\end{lem}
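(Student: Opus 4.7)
The plan is to reduce both identities to (i) the graded-commutator definitions $\dv{R}=[i_R,\dv{}]$ and $\dh{R}=[i_R,\dh{}]$, (ii) the Leibniz rule for $\DV{}$ or $\DH{}$ applied to the contraction $\theta_F(R(Y,Z))$, and (iii) Bianchi-type identities $\dv{}R=0$ and $\dh{}R=0$ for the curvature of the non-linear connection; the defining identity $g_F(X,Y)=(\DV{X}\theta_F)(Y)$ from (\ref{gF}) and the symmetry of $g_F$ in Lemma~\ref{lemma1} will be used throughout. For (\ref{dvR}) I would write $\dv{R}\theta_F = \dv{}(i_R\theta_F) + i_R(\dv{}\theta_F)$ (the sign is a $+$ because $R$ has even degree) and observe that the second summand vanishes by $\dv{}\theta_F = \dv{}\dv{}F = 0$. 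On the scalar 2-form $i_R\theta_F$, whose value on $(Y,Z)$ is $\theta_F(R(Y,Z))$, formula (\ref{dv2form}) gives $\dv{}(i_R\theta_F)(X,Y,Z) = \sum_{X,Y,Z}\DV{X}(i_R\theta_F)(Y,Z)$. Expanding $\DV{X}$ by Leibniz, the two contributions involving $R(\DV{X}Y,Z)$ and $R(Y,\DV{X}Z)$ cancel against the corrections intrinsic to the covariant derivative of a 2-form, leaving $g_F(X,R(Y,Z)) + \theta_F((\DV{X}R)(Y,Z))$ in each slot. The cyclic sum of the second piece is the contraction of $\theta_F$ with $\dv{}R$, which is zero, and the symmetry of $g_F$ converts the remaining sum into $\sum_{X,Y,Z}g_F(R(X,Y),Z)$.

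For (\ref{dhR}) I would run the same expansion $\dh{R}\theta_F = \dh{}(i_R\theta_F) + i_R(\dh{}\theta_F)$; this time neither summand vanishes a priori. The first summand unfolds exactly as in the vertical case, producing $\sum_{X,Y,Z}\bigl[(\DH{X}\theta_F)(R(Y,Z)) + \theta_F((\DH{X}R)(Y,Z))\bigr]$. For the second summand, the horizontal analogue of (\ref{dv1form}) gives $\dh{}\theta_F(R(X,Y),Z) = \DH{R(X,Y)}\theta_F(Z) - \DH{Z}\theta_F(R(X,Y))$, and a simple cyclic relabelling shows that $\sum\DH{Z}\theta_F(R(X,Y)) = \sum(\DH{X}\theta_F)(R(Y,Z))$, so those two contributions cancel against each other. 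What remains is $\sum_{X,Y,Z}\DH{R(X,Y)}\theta_F(Z) + \sum_{X,Y,Z}\theta_F((\DH{X}R)(Y,Z))$, and the last sum is the contraction of $\theta_F$ with $\dh{}R$, which is once again zero.

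The main obstacle will be justifying the two Bianchi identities $\dv{}R=0$ and $\dh{}R=0$, because the commutation table (\ref{commutators}) is stated in the excerpt only for the action on scalar forms. For $\dv{}R=0$ one needs $\dv{}\dv{}=0$ on the vector-valued 1-form $\Phi$, which follows from the symmetry $V_iV_j=V_jV_i$ together with the torsion condition $V_i\Gamma^j_k=V_k\Gamma^j_i$ noted in the paper, combined with $R=\onethird\dv{}\Phi$ from (\ref{Phi-R}). For $\dh{}R=0$ the analogous check starts from $\dh{}\Phi=\del R$ and the mixed commutation of $\dh{}$ with $\dv{}$ lifted to $\Phi$. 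A secondary sign care-point is that the graded commutators $[i_R,\dv{}]$ and $[i_R,\dh{}]$ act as anticommutators $i_R\dv{}+\dv{}i_R$ and $i_R\dh{}+\dh{}i_R$, since $R$ has even degree.
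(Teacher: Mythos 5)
Your proposal is correct and follows essentially the same route as the paper: both reduce $\dv{R}\theta_F$ and $\dh{R}\theta_F$ via the anticommutators $[i_R,\dv{}]$ and $[i_R,\dh{}]$, expand the contracted forms with the Leibniz rule and (\ref{dv2form})/(\ref{dv1form}), and dispose of the $\DV{X}R$ and $\DH{X}R$ contributions through $3\dv{}R=\dv{}\dv{}\Phi=0$ and the Bianchi identity $\dh{}R=0$, with the same cyclic cancellation in the horizontal case. The only difference is that you propose to re-derive $\dh{}R=0$ (your sketch via $\dh{}\Phi=\del R$ is not the standard argument), whereas the paper simply cites it from the calculus-along-$\tau$ literature.
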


\begin{proof}
In view of the fact that $\dv{}\dv{}=0$, $\dv{R}\theta_F$ reduces to
$\dv{}i_R\dv{}F$, and using (\ref{dv2form}) we then get
\begin{align*}
\dv{R}\theta_F(X,Y,Z)& =  \sum_{X,Y,Z} \DV{X}(i_R\dv{}F)(Y,Z)
= \sum_{X,Y,Z} (i_{\DV{X}R}\dv{}F + i_R \DV{X}\dv{}F)(Y,Z) \\
&= \sum_{X,Y,Z}  \Big[\dv{}F (\DV{X}R(Y,Z)) + g(X,R(Y,Z))\Big] \\
&= \sum_{X,Y,Z} g(R(X,Y),Z) + \dv{}F(\dv{}R(X,Y,Z)).
\end{align*}
Taking into account the fact that $3\dv{}R = \dv{}\dv{}\Phi=0$, the first
result follows.  For the second there are two terms to compute.  For
the first we have
\begin{align*}
\dh{}i_R\theta_F(X,Y,Z) &= \sum_{X,Y,Z} \DH{X}(i_R\theta_F)(Y,Z)
\\ &= \sum_{X,Y,Z} (i_{\DH{X}R}\theta_F + i_R \DH{X}\theta_F)(Y,Z)
= \sum_{X,Y,Z} \DH{X}\theta_F (R(Y,Z)),
\end{align*}
since the first term of the second line vanishes in view of the
Bianchi identity $\dh{}R=0$ \cite{MaCaSaI}. Secondly,
\[
i_R\dh{}\theta_F(X,Y,Z) = \sum_{X,Y,Z} \dh{}\theta_F(R(X,Y),Z) =
\sum_{X,Y,Z}\left[\DH{R(X,Y)}\theta_F(Z) -
\DH{Z}\theta_F(R(X,Y))\right].
\]
Adding these two expressions gives the desired result
(\ref{dhR}).
\end{proof}

\begin{lem} \label{lemma3}
If $\,\DH{}g$ is symmetric then
\begin{equation}
\dh{}(\Phi\hook g - (\Phi\hook g)^T) (X,Y,Z) = \sum_{X,Y,Z} g
(\nabla R (X,Y), Z). \label{DHg}
\end{equation}
\end{lem}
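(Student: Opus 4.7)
The plan is to unpack $\dh{}\sigma$, with $\sigma := \Phi\hook g - (\Phi\hook g)^T$, via the horizontal analogue of (\ref{dv2form}), namely $\dh{}\sigma(X,Y,Z) = \sum_{X,Y,Z} (\DH{X}\sigma)(Y,Z)$. Applying $\DH{X}$ to the defining relation $\sigma(Y,Z) = g(\Phi Y,Z) - g(Y,\Phi Z)$ by the Leibniz rule, and using $\DH{X}(\Phi Y) = (\DH{X}\Phi)(Y) + \Phi(\DH{X}Y)$, the connection-correction terms cancel in pairs and one is left with
\begin{equation*}
(\DH{X}\sigma)(Y,Z) = (\DH{X}g)(\Phi Y, Z) - (\DH{X}g)(Y, \Phi Z) + g((\DH{X}\Phi)(Y), Z) - g(Y,(\DH{X}\Phi)(Z)).
\end{equation*}

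The first main step is to show that the cyclic sum of the two $\DH{}g$-terms vanishes: this is where the hypothesis enters. Combined with the inherent symmetry of $g$, the assumed symmetry of $\DH{}g$ promotes the $(0,3)$ tensor $T(X,Y,Z):=(\DH{X}g)(Y,Z)$ to being totally symmetric in its three slots, and a short reshuffling of the three-fold cyclic sum yields $\sum T(X,\Phi Y,Z) = \sum T(X,Y,\Phi Z)$. For the $\Phi$-terms, the symmetry of $g$ together with a cyclic relabelling converts the second piece so that the remaining sum can be regrouped as
\begin{equation*}
\sum_{X,Y,Z} g\bigl((\DH{X}\Phi)(Y) - (\DH{Y}\Phi)(X),\, Z\bigr),
\end{equation*}
and the inner expression is precisely $(\dh{}\Phi)(X,Y)$. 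Invoking the identity $\dh{}\Phi = \nabla R$ from (\ref{Phi-R}) then yields the stated right-hand side $\sum g(\nabla R(X,Y),Z)$.

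The main obstacle is simply keeping the cyclic bookkeeping honest. The one conceptual point worth flagging is that the hypothesized symmetry of $\DH{}g$, when combined with the symmetry of $g$, automatically upgrades to total symmetry of the $(0,3)$ tensor $T$; with this in hand the $\DH{}g$ contribution disappears and the rest is a direct appeal to the curvature identity (\ref{Phi-R}).
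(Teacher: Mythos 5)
Your proof is correct and follows essentially the same route as the paper: expand $\dh{}$ of the 2-form as a cyclic sum of $\DH{X}$-derivatives, cancel the $\DH{}g$ contributions using the assumed symmetry (together with the symmetry of $g$, which the paper also uses implicitly), and regroup the remaining terms into $\sum_{X,Y,Z} g(\dh{}\Phi(X,Y),Z)$ before invoking $\dh{}\Phi=\nabla R$. The only difference is presentational: you make the total symmetry of the $(0,3)$ tensor $\DH{}g$ explicit, where the paper compresses that step into a remark about ``cyclic sum freedom''.
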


\begin{proof}
We have
\begin{eqnarray*}
\lefteqn{ \dh{}(\Phi\hook g - (\Phi\hook g)^T) (X,Y,Z) =
\sum_{X,Y,Z} \DH{X} (\Phi\hook g - (\Phi\hook g)^T) (Y,Z)} \\
&& \hspace*{-.2cm} = \sum_{X,Y,Z} ( \DH{X}\Phi\hook g + \Phi \hook
\DH{X} g -
(\DH{X}\Phi\hook g)^T -( \Phi \hook \DH{X} g)^T  )(Y,Z)\\
&& \hspace*{-.2cm} = \sum_{X,Y,Z} \Big[ g(\DH{X}\Phi(Y),Z) -
g(\DH{X}\Phi(Z),Y) \Big] + \sum_{X,Y,Z} \Big[ \DH{X}g(\Phi Y,Z) -
\DH{X}g(\Phi Z,Y) \Big].
\end{eqnarray*}
Making use of the cyclic sum freedom in the second and fourth term,
and of the symmetry of $\DH{}g$ in the third, the right-hand side
reduces to
\[
\sum_{X,Y,Z}  g(\DH{X}\Phi(Y) - \DH{Y}\Phi(X),Z) =   \sum_{X,Y,Z}
g(\dh{}\Phi(X,Y),Z),
\]
which proves our statement in view of $\dh{}\Phi=\nabla R$.
\end{proof}

\begin{lem} \label{lemma4}
If $g$ and $\DV{}g$ are both symmetric then
\begin{eqnarray*}
\lefteqn{[\nabla,\DH{}]g\,(X,Y,Z) - [\nabla,\DH{}]g\,(Y,X,Z)} \\
&& \hspace*{3cm}= \DV{Z} (\Phi\hook g - (\Phi\hook g)^T)(X,Y) -
\sum_{X,Y,Z} g(R(X,Y),Z).
\end{eqnarray*}
\end{lem}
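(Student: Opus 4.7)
The approach is to compute the commutator $[\nabla, \DH{X}]$ first on vector fields along $\tau$, then extend it by duality to an action on the $(0,2)$ tensor $g$, and finally antisymmetrize in $X,Y$ and use the symmetries of $g$ and $\DV{}g$ to match the stated right-hand side.

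First, I would derive the formula
\[
[\nabla, \DH{X}]Y = \DH{\nabla X}Y + \DV{\Phi X}Y - (\DV{Y}\Phi)X - R(X,Y)
\]
by applying the Jacobi identity to $[\Gamma,[\H{X},\V{Y}]]$, using the bracket decompositions $[\Gamma,\H{X}] = \H{\nabla X} + \V{\Phi X}$, $[\Gamma,\V{Z}] = -\H{Z} + \V{\nabla Z}$, $[\H{X},\V{Y}] = \V{\DH{X}Y} - \H{\DV{Y}X}$, and $[\H{X},\H{Y}] = \H{\DH{X}Y-\DH{Y}X} + \V{R(X,Y)}$. Reading off the vertical component of the Jacobi relation gives the formula above (the horizontal component recovers the known $[\nabla, \DV{}] = -\DH{}$). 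On functions, $[\nabla,\DH{X}]F = [\Gamma,\H{X}]F = \DH{\nabla X}F + \DV{\Phi X}F$.

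Extending $[\nabla,\DH{X}]$ as a derivation to $g$ and peeling off the $\DH{\nabla X}$ piece that comes from the differential slot of $\DH{}g$, one finds
\[
[\nabla,\DH{}]g(X,Y,Z) = (\DV{\Phi X}g)(Y,Z) + g((\DV{Y}\Phi)X,Z) + g(R(X,Y),Z) + g(Y,(\DV{Z}\Phi)X) + g(Y,R(X,Z)).
\]
Subtracting the same expression with $X\leftrightarrow Y$ yields for the curvature block $2\,g(R(X,Y),Z) + g((\DV{Y}\Phi)X - (\DV{X}\Phi)Y,Z) + g(Y,R(X,Z)) - g(X,R(Y,Z))$. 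Applying $(\DV{X}\Phi)Y - (\DV{Y}\Phi)X = \dv{}\Phi(X,Y) = 3R(X,Y)$ from (\ref{Phi-R}), together with the symmetry of $g$ and antisymmetry of $R$, collapses this to $-g(R(X,Y),Z) - g(X,R(Y,Z)) + g(Y,R(X,Z)) = -\sum_{X,Y,Z} g(R(X,Y),Z)$.

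The surviving $\Phi$-contribution is $(\DV{\Phi X}g)(Y,Z) - (\DV{\Phi Y}g)(X,Z) + g(Y,(\DV{Z}\Phi)X) - g(X,(\DV{Z}\Phi)Y)$. The symmetry of $\DV{}g$ in all three arguments rewrites $(\DV{\Phi X}g)(Y,Z) = (\DV{Z}g)(\Phi X,Y)$ and similarly for the $\Phi Y$ term, while the symmetry of $g$ recasts the last two as $g((\DV{Z}\Phi)X,Y) - g(X,(\DV{Z}\Phi)Y)$; the four terms then precisely match the expansion of $\DV{Z}(\Phi\hook g - (\Phi\hook g)^T)(X,Y)$ computed from $(\Phi\hook g)(X,Y)=g(\Phi X,Y)$ via the derivation rule. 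The principal obstacle is the first step: carefully tracking horizontal and vertical parts in the Jacobi expansion. Once the commutator formula is established, the remainder is disciplined bookkeeping in which the identity $\dv{}\Phi = 3R$ is the arithmetic hinge that fuses three distinct curvature contributions into the single cyclic sum on the right-hand side.
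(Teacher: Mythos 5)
Your argument is correct. It is essentially the paper's own route: the heart of the proof in both cases is to antisymmetrize the formula for $[\del,\DH{}]$ acting on a symmetric $(0,2)$ tensor in its first two arguments, and then to use the total symmetry of $\DV{}g$ together with $\dv{}\Phi=3R$ to fuse the curvature terms into the cyclic sum and to recognize the remaining terms as $\DV{Z}(\Phi\hook g-(\Phi\hook g)^T)(X,Y)$ — exactly the ``fairly simple computation'' the paper alludes to. The one genuine difference is in how the commutator formula itself is obtained: the paper simply quotes
\[
[\del,\DH{}]g\,(X,Y,Z) = \DV{\Phi X}g(Y,Z) - 2g(R(X,Y),Z) -2g(R(X,Z),Y) + g(\DV{X}\Phi(Y),Z) + g(\DV{X}\Phi(Z),Y)
\]
from the earlier literature, whereas you derive the operator identity $[\del,\DH{X}]Y = \DH{\del X}Y + \DV{\Phi X}Y - (\DV{Y}\Phi)X - R(X,Y)$ from the Jacobi identity applied to $[\Gamma,[\H{X},\V{Y}]]$ and then extend it as a tensor derivation (after peeling off the $\DH{\del X}$ slot term). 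Your resulting expression for $[\del,\DH{}]g$ is equivalent to the quoted one precisely via $\dv{}\Phi=3R$, so the net effect is a self-contained proof of the imported ingredient; the paper's version is shorter because it leans on the reference, yours buys independence from it at the cost of the bracket bookkeeping, which you carry out correctly.
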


\begin{proof}
The commutator $[\del,\DH{}]$ is rather complicated when it comes to
its action on a symmetric type $(0,2)$ tensor $g$. It reads (see for
example \cite{CSMBP} where it was already used):
\begin{align*}
[\del,\DH{}]g\,(X,Y,Z) &= \DV{\Phi X}g(Y,Z) - 2g(R(X,Y),Z) -2g(R(X,Z),Y)  \\
&\qquad\mbox{} + g(\DV{X}\Phi(Y),Z) + g(\DV{X}\Phi(Z),Y).
\end{align*}
Subtracting the same expression with $X$ and $Y$ interchanged, it is
however a fairly simple computation, using the symmetry of $\DV{}g$
and the property $\dv{}\Phi=3R$, to arrive at the desired result.
\end{proof}

\begin{lem} \label{lemma5}
For all $F\in\cinfty{TQ}$ we have
\begin{equation}
\DH{}\DV{}\DV{}F (X,Y,Z) - \DH{}\DV{}\DV{}F (Y,X,Z) =
\DV{Z}\dh{}\dv{}F (X,Y). \label{DHDV2}
\end{equation}
\end{lem}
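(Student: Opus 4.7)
The plan is to reduce the identity to a direct application of equation (\ref{DHDV}) from the proof of Theorem~\ref{thm1}, combined with the complete symmetry of the $(1,3)$-tensor $\theta$ that appears there. Writing $\theta_F=\dv{}F$ and $g_F=\DV{}\DV{}F=\DV{}\theta_F$, the convention $\DH{}T(X,\ldots)=\DH{X}T(\ldots)$ lets us rewrite the left-hand side of (\ref{DHDV2}) as $\DH{X}g_F(Y,Z)-\DH{Y}g_F(X,Z)$, so the goal becomes to identify this antisymmetrization with $\DV{Z}\dh{}\theta_F(X,Y)$.

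First I would apply (\ref{DHDV}) and use the symmetry $g_F(Y,Z)=g_F(Z,Y)$ (Lemma~\ref{lemma1}) to obtain
\[
\DH{X}g_F(Y,Z) \;=\; \DV{Z}\DH{}\theta_F(X,Y) \;+\; \theta_F\bigl(\theta(X,Z)Y\bigr),
\]
and then swap the labels $X$ and $Y$ to obtain the analogous expression for $\DH{Y}g_F(X,Z)$. On subtracting the two, the $\theta_F\bigl(\theta(\cdot,\cdot)\cdot\bigr)$ contributions cancel, because complete symmetry of $\theta$ in its three lower indices forces $\theta(X,Z)Y=\theta(Y,Z)X$.

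What remains on the right is $\DV{Z}\bigl[\DH{}\theta_F(X,Y)-\DH{}\theta_F(Y,X)\bigr]$. The bracketed expression is precisely $\dh{}\theta_F(X,Y)$ by the horizontal analogue of (\ref{dv1form}), and since $\DV{Z}$ is a derivation of degree zero it commutes with the algebraic antisymmetrization in the trailing two slots, so the right-hand side collapses to $\DV{Z}\dh{}\theta_F(X,Y)=\DV{Z}\dh{}\dv{}F(X,Y)$, as required. The only real difficulty is the bookkeeping: accurately tracking how the arguments of the various $(0,k)$-tensors get permuted through the single use of (\ref{DHDV}) and through the symmetries of $g_F$ and $\theta$. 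Once those positions are aligned, the cancellation of the $\theta$-correction terms is automatic and no further computation is needed.
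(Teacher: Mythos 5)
Your proposal is correct and follows essentially the same route as the paper's own proof: swap the last two arguments via the symmetry of $g_F$, apply (\ref{DHDV}), cancel the $\theta_F(\theta(\cdot,\cdot)\cdot)$ terms by the complete symmetry of $\theta$, and recognise the remaining antisymmetrised part as $\DV{Z}\dh{}\theta_F(X,Y)$. You have simply spelled out in detail what the paper compresses into ``a direct application of this formula easily leads to the result.''
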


\begin{proof}
This is in fact a variation of a formula which was already used in
proving that $\dh{}\theta_F$ is basic in the second part of the
proof of Theorem~\ref{thm1}. We have to appeal again to the general
formula (\ref{DHDV}), applied to $\DV{}F=\dv{}F=\theta_F$. After
swapping the last two arguments in each term on the left in
(\ref{DHDV2}), a direct application of this formula easily leads to
the result.
\end{proof}

\begin{thm} \label{thm3}
The second-order field $\Gamma$ represents a dissipative system of
type (\ref{dissip1}) if and only if there exists a symmetric type
$(0,2)$ tensor $g$ along $\tau$ such that both $\DV{}g$ and
$\DH{}g$ are symmetric and
\begin{equation}
\sum_{X,Y,Z}g(R(X,Y),Z)=0. \label{Rcond}
\end{equation}
\end{thm}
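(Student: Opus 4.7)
The plan is to prove both directions by reducing to Theorem~\ref{thm1} (and its Corollary), showing that once $g$ has the listed properties, the function $D$ that appears in Theorem~\ref{thm1} can be constructed rather than assumed, and conversely that the presence of $D$ forces exactly the listed conditions on $g$.

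For the necessity direction I would start from a dissipative representation and set $g=\DV{}\DV{}L$. Symmetry of $g$ and $\DV{}g$ is automatic. For $\DH{}g$, I would use the commutator identity $[\del,\DV{}]=-\DH{}$ applied to $g$, together with $\del g=\DV{}\DV{}D$ from (\ref{delgD}), exactly as in the proof of Theorem~\ref{thm1}: the right-hand side is obviously symmetric in its first two arguments, whence so is $\DH{}g$. For the cyclic sum condition, I would apply $\dh{}$ to the identity $\dh{}\theta_L=0$ coming from the Corollary: by the commutation rule $\dh{}\dh{}=\dv{R}$ in (\ref{commutators}) one gets $\dv{R}\theta_L=0$, and Lemma~\ref{lemma2} rewrites this as $\sum_{X,Y,Z}g(R(X,Y),Z)=0$.

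For the sufficiency direction the plan is constructive: (i) symmetry of $g$ and $\DV{}g$ give a local function $F$ with $g=\DV{}\DV{}F$; (ii) symmetry of $\DH{}g$ combined with Lemma~\ref{lemma5} yields $\DV{Z}\dh{}\theta_F(X,Y)=0$, so $\dh{}\theta_F$ is a basic 2-form; (iii) Lemma~\ref{lemma2} together with the cyclic condition (\ref{Rcond}) gives $\dv{R}\theta_F=0$, and since $\dh{}\dh{}=\dv{R}$ on scalar forms this means the basic 2-form $\dh{}\theta_F$ is closed, so locally $\dh{}\theta_F=\dh{}\alpha$ for a basic 1-form $\alpha$; (iv) I then set $L=F-i_{\T}\alpha$. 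Using that $\DV{}i_{\T}\alpha=\alpha$ for any basic 1-form, one has $\theta_L=\theta_F-\alpha$, hence $\dh{}\theta_L=0$; and since $L-F$ is affine in the velocities, $L$ has the same Hessian $g$ and is therefore regular. Finally, to produce $D$, observe that $\del\theta_L-\dh{}L$ is a 1-form along $\tau$ which is $\dv{}$-closed: indeed $\dv{}\del\theta_L=\del\dv{}\theta_L+\dh{}\theta_L=0$ by $[\dv{},\del]=\dh{}$, and $\dv{}\dh{}L=-\dh{}\theta_L=0$. Triviality of the $\dv{}$-cohomology then delivers a local function $D$ with $\del\theta_L=\dh{}L+\dv{}D$, which is (\ref{thetaLD2}); by Proposition~\ref{prop1} this means $\Gamma$ is dissipative of type (\ref{dissip1}).

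The main obstacle I expect is in the sufficiency direction, where a priori Theorem~\ref{thm1} requires one to verify the $\Phi$-condition (\ref{Phi-gD}) in addition to $\del g=\DV{}\DV{}D$, and now we have neither a candidate $D$ nor the condition on $\Phi$. The point to understand is that the $\Phi$-condition is in effect already encoded by (\ref{Rcond}): it guarantees that the basic form $\dh{}\theta_F$ is closed, which is what allows the modification $L=F-i_{\T}\alpha$ to produce $\dh{}\theta_L=0$, and once that holds the existence of $D$ is automatic from $\dv{}$-acyclicity rather than from any separate hypothesis. So the technical core of the argument is the interplay of Lemmas~\ref{lemma2} and \ref{lemma5} with the commutation rules in (\ref{commutators}), which is where the cyclic-sum hypothesis does its work.
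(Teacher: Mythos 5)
Your proof is correct, and while your necessity half follows the paper almost verbatim (the paper obtains $\sum_{X,Y,Z}g(R(X,Y),Z)=0$ by applying $\dv{}$ to the $\Phi$-condition (\ref{Phi-gD}) and using $0=\dv{}\dv{}\dh{}D=\dv{}i_{\dv{}\Phi}\theta_F=3\dv{R}\theta_F$, whereas you get $\dv{R}\theta_L=\dh{}\dh{}\theta_L=0$ directly from $\dh{}\theta_L=0$ -- an equivalent one-liner via Lemma~\ref{lemma2}), your sufficiency half takes a genuinely different route. The paper works at the level of Theorem~\ref{thm1}: it first produces $D$ from the observation that $\del g$ is itself a Hessian, then uses Lemmas~\ref{lemma3}, \ref{lemma4} and \ref{lemma5} to show that $\beta=\Phi\hook g-(\Phi\hook g)^T+\dh{}\dv{}D$ is a basic closed 2-form, and finally corrects $D$ to $\widetilde{D}=D-i_\T\alpha$ so that both (\ref{delgD}) and (\ref{Phi-gD}) hold. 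You instead work at the level of the Corollary: you correct the potential $F$ (with $g=\DV{}\DV{}F$) to $L=F-i_\T\alpha$ so that $\dh{}\theta_L=0$, using only Lemma~\ref{lemma5} (basicness of $\dh{}\theta_F$) and Lemma~\ref{lemma2} together with $\dh{}\dh{}=\dv{R}$ from (\ref{commutators}) (closedness), and then $D$ comes for free from $\dv{}$-acyclicity and Proposition~\ref{prop1}. This is shorter and dispenses with Lemmas~\ref{lemma3} and \ref{lemma4} altogether; what the paper's route buys is an explicit dissipation function adapted to the multiplier, namely $\widetilde{D}$ with $\del g=\DV{}\DV{}\widetilde{D}$ and $\Phi\hook g-(\Phi\hook g)^T=\dv{}\dh{}\widetilde{D}$, which is what one actually solves for in the examples of Section~5. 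Two small points: in your last step you apply $[\del,\dv{}]=-\dh{}$ to the 1-form $\theta_L$, while the paper invokes this commutator only on functions; this is harmless, since you could instead write $\del\theta_L=\dv{}(\Gamma(L))-\dh{}L$ and note $\dv{}(\del\theta_L-\dh{}L)=2\dh{}\theta_L=0$, or simply cite the Corollary, which already asserts that $\dh{}\theta_L=0$ is equivalent to (\ref{thetaLD2}) for some $D$. Also, as in the paper's own statement, non-degeneracy of $g$ must be assumed tacitly to guarantee that $L$ is regular.
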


\begin{proof}
Assume first that the conditions of Theorem~\ref{thm1} hold true. So
$g$ and $\DV{}g$ are symmetric and as before, since $\del$ preserves
the symmetry of $\DV{}g$ and also $\DV{}\del g= \DV{}\DV{}\DV{}D$ is
manifestly symmetric, we conclude that $\DH{}g$ is symmetric.
Moreover, if $F$ is any function such that $g=\DV{}\DV{}F$, we know
from Lemma~\ref{lemma1} that
\[
\Phi\hook g - (\Phi\hook g)^T = i_{\dv{}\Phi}\theta_F -
\dv{}i_\Phi\theta_F.
\]
It then follows from the last condition in Theorem~\ref{thm1} that
\[
0=\dv{}\dv{}\dh{}D = \dv{}i_{\dv{}\Phi}\theta_F = 3\,\dv{R}\theta_F,
\]
so that the first statement in Lemma~\ref{lemma2} implies
(\ref{Rcond}).

For the converse, symmetry of $g$ and $\DV{}g$ imply that $\del g$
and $\del\DV{}g$ are symmetric, and since in addition $\DH{}g$ is
symmetric, we conclude that $\DV{}\del g$ is symmetric, which means
that $\del g$ is also a Hessian (see Lemma~\ref{lemma1}), say
$\del g=\DV{}\DV{}D$ for some function $D$.  Next, we look at the
statement of Lemma~\ref{lemma4} in which the last term vanishes here
by assumption.  We have that $\del\DH{}g$ is symmetric, so that the
left-hand side reduces to
\[
-\DH{}\DV{}\DV{}D(X,Y,Z) + \DH{}\DV{}\DV{}g(Y,X,Z).
\]
Combining the results of Lemma~\ref{lemma4} and
Lemma~\ref{lemma5} we conclude that the 2-form
\[
\beta:= \Phi\hook g - (\Phi\hook g)^T + \dh{}\dv{}D
\]
is basic. Now from the last of the properties (\ref{commutators})
and Lemma~\ref{lemma2} applied to $\del g$, which is determined by
$\theta_D$, we know that
\[
\dh{}\dh{}\dv{}D = \dv{R}\theta_D = \sum_{X,Y,Z} \del g (R(X,Y),Z).
\]
This in turn, making use also of the result of Lemma~\ref{lemma3},
gives rise to the following calculation:
\begin{align*}
\dh{}\beta &= \sum_{X,Y,Z}g(\del R(X,Y),Z) + \sum_{X,Y,Z} \del g
(R(X,Y),Z)\\
&= \del\Big(\sum_{X,Y,Z}g(R(X,Y),Z)\Big) \\
&\qquad \mbox{} - \sum_{X,Y,Z}g(R(\del X,Y),Z) - \sum_{X,Y,Z}g(R(X,\del
Y),Z) - \sum_{X,Y,Z}g(R(X,Y),\del Z) \\
&= - \sum_{X,Y,Z}\left[g(R(\del Z,X),Y) + g(R(Y,\del Z),X) +
g(R(X,Y),\del Z)\right].
\end{align*}
The expression between square brackets in the last line is zero
because of (\ref{Rcond}), with vector arguments $X,Y$ and $\del Z$;
it follows that $\beta$ is closed, thus locally $\beta=\dh{}\alpha$
for some basic 1-form $\alpha$. Putting $\widetilde{D}= D -
i_\T\alpha$, we have $\del g=
\DV{}\DV{}D=\DV{}\DV{}\widetilde{D}$, and $\beta-\dh{}\dv{}D = -
\dh{}\dv{}\widetilde{D}$, so that $\Phi\hook g - (\Phi\hook g)^T =
\dv{}\dh{}\widetilde{D}$ and all conditions of Theorem~\ref{thm1}
are satisfied.
\end{proof}

The results of Theorem~\ref{thm3} deserve some further comments.
Establishing necessary and sufficient conditions for the existence
of a Lagrangian is in a way the easy part of the inverse problem; the hard
part is the study of formal integrability of these conditions, for
which a number of different techniques exist (see for example
\cite{AT92}, \cite{GM1, GM2}, \cite{SaCraMa}). If we go back to the
standard Helmholtz conditions (\ref{Helmholtz2}), for example, two
of the first integrability conditions one encounters are the
symmetry of $\DH{}g$ and the algebraic condition (\ref{Rcond}). So
in the standard situation, if a $g$ exists satisfying
(\ref{Helmholtz2}), these properties will automatically hold true:\
it seems to us noteworthy that these two integrability
conditions make their appearance in the dissipative case as part of
the starting set of necessary and sufficient conditions. It is
further worth observing that the case of Rayleigh dissipation can be
characterized by the further restriction that $\DV{}\del g=0$.
Indeed, since $\del g=\DV{}\DV{}D$, this extra condition will imply
that $D$ must be quadratic in the velocities.

The coordinate expressions of the conditions in
Theorem~\ref{thm3} are, apart from (\ref{HD1}),
\begin{align}
H_{i}(g_{jk}) - H_{j}(g_{ik}) + g_{il} \conn{l}{j}{k} - g_{jl} \conn{l}{i}{k} &=0 \\
g_{ij}R^j_{kl} + g_{lj}R^j_{ik} + g_{kj}R^j_{li} &=0,
\end{align}
where $\conn{l}{j}{k} = V_k(\Gamma^l_j)$ and $R^k_{ij}=
H_j(\Gamma^k_i) - H_i(\Gamma^k_j) =
\onethird(V_i(\Phi^k_j)-V_j(\Phi^k_i))$.

We now proceed in the same way for the gyroscopic case.

\begin{thm} \label{thm4}
If the second-order field $\Gamma$ represents a gyroscopic system of
type (\ref{dissip2}) then there exists a symmetric type $(0,2)$
tensor $g$ along $\tau$ such that $\DV{}g$ is symmetric, $\del g=0$
and
\begin{equation}
\left(\Phi\hook g - (\Phi\hook g)^T\right)(X,Y)= \sum_{X,Y,\T}g(R(X,Y),\T).
\label{Rcond2}
\end{equation}
The converse is true as well, provided we assume that $\Phi\hook g$
is smooth on the zero section of $TQ\rightarrow Q$.
\end{thm}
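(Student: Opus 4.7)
Plan: The argument should mirror Theorem~\ref{thm3}, substituting (\ref{Rcond2}) for (\ref{Rcond}) and using Lemma~\ref{lemma2} as the bridge between $i_\T\dh{}\omega$ and the cyclic sum $\sum g(R(X,Y),\T)$. For necessity, Theorem~\ref{thm2} already produces a symmetric $g$ with $\DV{}g$ symmetric, $\del g = 0$, and a basic 2-form $\omega$ satisfying $\Phi\hook g - (\Phi\hook g)^T = i_\T\dh{}\omega$. Since $\omega = \dh{}\theta_L$ by (\ref{dhthetaLomega}), the commutator $\dh{}\dh{} = \dv{R}$ from (\ref{commutators}) gives $\dh{}\omega = \dv{R}\theta_L$; Lemma~\ref{lemma2} then evaluates $\dv{R}\theta_L$ as the cyclic sum $\sum g(R(X,Y),Z)$, and contracting with $\T$ converts the $\Phi$-condition of Theorem~\ref{thm2} into precisely (\ref{Rcond2}).

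For the converse, I would use Lemma~\ref{lemma1} to write $g = \DV{}\DV{}F$. The condition $\del g = 0$, combined with $[\del,\DV{}] = -\DH{}$ and the symmetry of $\DV{}g$, forces $\DH{}g$ to be symmetric, and the same argument as in the proofs of Theorems~\ref{thm1} and~\ref{thm2} then shows that $\dh{}\theta_F$ is basic. One can therefore simply \emph{define} $\omega := \dh{}\theta_F$, which is basic by construction, and check that $\beta := \del\theta_F - \dh{}F - i_\T\omega$ is a basic 1-form, exactly as in the proof of Theorem~\ref{thm2}.

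The decisive step is to verify $\dh{}\beta = 0$. Expanding with $[\del,\dh{}] = 2i_R + \dv{\Phi}$, the Cartan-type identity $\dh{}i_\T = \dh{\T} - i_\T\dh{}$, the fact that $\dh{\T}$ coincides with $\del$ on basic forms, and $\dh{}\dh{}\theta_F = \dv{R}\theta_F$, the $\del\omega$ contributions cancel, leaving $\dh{}\beta = -3\,i_R\theta_F - \dv{\Phi}\theta_F + i_\T\dv{R}\theta_F$. Lemma~\ref{lemma1} (combined with $\dv{}\theta_F = 0$ and $\dv{}\Phi = 3R$ from (\ref{Phi-R})) rewrites $\dv{\Phi}\theta_F = (\Phi\hook g - (\Phi\hook g)^T) - 3\,i_R\theta_F$, collapsing the first two terms into $-(\Phi\hook g - (\Phi\hook g)^T)$, while Lemma~\ref{lemma2} identifies $i_\T\dv{R}\theta_F$ with the cyclic sum $\sum g(R(X,Y),\T)$. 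The hypothesis (\ref{Rcond2}) is then exactly what cancels these, so $\dh{}\beta = 0$. Consequently $\beta = \dh{}f$ locally on $Q$, and $L := F + f$ satisfies $\del\theta_L = \dh{}L + i_\T\omega$ with $\omega$ basic, so Proposition~\ref{prop2} closes the argument.

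The main obstacle is the bookkeeping in this $\dh{}\beta$ calculation, which requires coordinating (\ref{commutators}), (\ref{Phi-R}), and Lemma~\ref{lemma1} so that the $\Phi$ and $R$ contributions land in matching form. The smoothness hypothesis on $\Phi\hook g$ at the zero section is most naturally used to guarantee that the constructed $\omega = \dh{}\theta_F$ and $L = F + f$ are smooth across $v = 0$, since the right-hand side of (\ref{Rcond2}) automatically vanishes where $\T = 0$ and so imposes a nontrivial compatibility condition on the left-hand side at the zero section that must be built in by hand.
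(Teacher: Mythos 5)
Your proof is correct, but the converse follows a genuinely different route from the paper's. For necessity you differ only cosmetically: the paper gets $\dh{}\omega=\dv{R}\theta_L$ by applying $\dv{}$ to (\ref{Phi-gomega}) and using (\ref{aux}), you get it from (\ref{dhthetaLomega}) and $\dh{}\dh{}=\dv{R}$; both finish with Lemma~\ref{lemma2}. For sufficiency the paper never exhibits a potential for $\omega$: it uses Lemma~\ref{lemma4} to derive the Euler relation $\DV{\T}\left(\Phi\hook g-(\Phi\hook g)^T\right)=\Phi\hook g-(\Phi\hook g)^T$, invokes the zero-section smoothness hypothesis to upgrade degree-one homogeneity to linearity in the velocities, writes the skew part as $i_\T\rho$ for a basic 3-form $\rho$, shows $\dh{}\rho=0$ via Lemma~\ref{lemma3}, puts $\rho=\dh{}\omega$, and only then reduces to Theorem~\ref{thm2}. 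You instead fix $F$ with $g=\DV{}\DV{}F$, note (as in Theorems~\ref{thm1} and~\ref{thm2}) that $\dh{}\theta_F$ is basic, define $\omega:=\dh{}\theta_F$, and verify Proposition~\ref{prop2} directly; your $\dh{}\beta$ bookkeeping is correct, and in fact (\ref{Rcond2}) together with Lemma~\ref{lemma2} says precisely $\Phi\hook g-(\Phi\hook g)^T=i_\T\dh{}\dh{}\theta_F=i_\T\dh{}\omega$, so at that point you could simply quote the sufficiency part of Theorem~\ref{thm2} instead of redoing its computation. What each buys: your argument is shorter, avoids Lemmas~\ref{lemma3} and~\ref{lemma4} and the homogeneity step entirely, and whenever $g$ is smooth on all of $TQ$ (so that Lemma~\ref{lemma1} produces a smooth $F$ by fibrewise integration) it makes no use of the extra smoothness hypothesis, which shows that hypothesis only has content when $g$, hence $F$ and $\theta_F$, may misbehave at the zero section; the paper's argument is tailored to exactly that situation, since it works with $\Phi\hook g$ itself rather than a chosen $F$. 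Your closing remark about where the smoothness assumption enters is therefore not the paper's use of it (homogeneity implies linearity); in your construction the delicate point at $v=0$ would be the existence of the smooth Hessian potential $F$ across the zero section, not the smoothness of $\omega$ and $L$ once $F$ is available.
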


\begin{proof}
Assume we have a $g$ and $\omega$ satisfying the conditions of
Theorem~\ref{thm2}. Acting on the condition (\ref{Phi-gomega}) with
$\dv{}$, the left-hand side reduces, as in the proof of the
preceding theorem, to $3\dv{R}\theta_F$ for any $F$ such that
$g=\DV{}\DV{}F$. For the right-hand side, we get
$\dv{}i_\T\dh{}\omega = 3\dh{}\omega$. Hence
$\dh{}\omega=\dv{R}\theta_F$, and (\ref{Phi-gomega}) can be written
as
\begin{equation}
\Phi\hook g - (\Phi\hook g)^T = i_\T\dv{R}\theta_F.
\label{PhithetaF}
\end{equation}
Making use of Lemma~\ref{lemma2} the result now immediately follows.

Conversely, (\ref{Rcond2}) obviously implies (\ref{PhithetaF}) for
any $F$ such that $g=\DV{}\DV{}F$. $\DV{}g$ symmetric and $\del g=0$
imply that $\DH{}g$ is symmetric and then also $[\del,\DH{}]g$ is
symmetric. It follows from Lemma~\ref{lemma4} that
\[
\DV{Z} (\Phi\hook g - (\Phi\hook g)^T)(X,Y) = \sum_{X,Y,Z}
g(R(X,Y),Z), \qquad \forall X,Y,Z.
\]
In particular, taking $Z$ to be $\T$ and using Lemma~\ref{lemma2}
again plus (\ref{PhithetaF}), we obtain
\[
\DV{\T} (\Phi\hook g - (\Phi\hook g)^T) = \Phi\hook g - (\Phi\hook
g)^T.
\]
This asserts that $\Phi\hook g - (\Phi\hook g)^T$ is homogeneous
of degree 1 in the fibre coordinates. The additional smoothness
assumption then further implies linearity in the fibre coordinates,
so that there exists a basic 3-form $\rho$ such that $\Phi\hook g -
(\Phi\hook g)^T = i_\T \rho$. There are two conclusions we can
draw from this by taking appropriate derivatives. On the one hand,
taking the horizontal exterior derivative and using
Lemma~\ref{lemma3} we obtain
\[
\sum_{X,Y,Z} g (\nabla R (X,Y), Z) = (\dh{}i_\T\rho)(X,Y,Z) =
(\del\rho - i_\T\dh{}\rho)(X,Y,Z).
\]
On the other, knowing that $\DV{Z}i_\T\rho= i_Z\rho$ for any $Z$ and
appealing once more to the general conclusion of Lemma~\ref{lemma4},
we see that actually $\rho(X,Y,Z)=\sum_{X,Y,Z}g(R(X,Y),Z)$, from
which it follows in view of $\del g=0$ that
$\del\rho(X,Y,Z)=\sum_{X,Y,Z}g(\del R(X,Y),Z)$. The conclusion from
the last displayed equation is that $i_\T\dh{}\rho=0$. But then
$0=\DV{X}i_\T\dh{}\rho = i_X\dh{}\rho$ for all $X$, so that
$\dh{}\rho=0$ and locally $\rho=\dh{}\omega$ for some basic
$\omega$. It follows that
\[
\Phi\hook g - (\Phi\hook g)^T = i_\T\dh{}\omega,
\]
which completes the proof.
\end{proof}

In contrast with the preceding theorem, the condition (\ref{Rcond2})
which makes its appearance here is not one which is directly
familiar from the integrability analysis of the standard Helmholtz
conditions. But indirectly, when $\omega=0$, the left-hand side
vanishes and the fact that this is also the case for the right-hand
side follows from the integrability condition (\ref{Rcond}).

The coordinate expressions of the conditions in Theorem~\ref{thm4}, in
addition to (\ref{HD1}) and (\ref{Hg2}), are
\begin{equation}
g_{lj}\Phi^j_k - g_{kj}\Phi^j_l =(g_{ij}R^j_{kl} + g_{lj}R^j_{ik} +
g_{kj}R^j_{li})v^i.
\end{equation}

Before embarking on examples, it is worth emphasizing a fundamental
advantage of our intrinsic approach: we are not restricted to the
coordinate expressions in natural bundle coordinates listed so far, if
there are good reasons to work in a non-standard frame.  This is the
case, for example, in applications where it is appropriate to work
with so-called quasi-velocities.  Quasi-velocities are just fibre
coordinates in $TQ$ with respect to a non-standard frame $\{{X_i}\}$
of vector fields on $Q$ (which also constitute a basis for the module
of vector fields along $\tau$).  All conditions we have encountered so
far may be projected onto such a frame and rewritten in terms of the
quasi-velocities.  For example, take the condition (\ref{thetaLD2}) we
started from in the preceding section.  It can be expressed as
follows:
\begin{align*}
0 &= (\nabla\theta_L - \dh{} L - \dv{} D) (X_i) \\
&=\Gamma(\theta_L(X_i)) - \theta_L(\nabla X_i) - \hlift{X}_i (L) - \vlift{X}_i( D) \\
&= \Gamma (\vlift{X}_i(L)) - \vlift{(\nabla X_i)} (L) -\hlift{X}_i (L) - \vlift{X} _i (D)\\
& = \Gamma (\vlift{X}_i(L)) -\clift{X}_i (L) - \vlift{X} _i (D),
\end{align*}
where $\clift{X}_i$ stands for the complete lift of the vector field
$X_i$. Quasi-velocities $w^i$ can be thought of as the components of
$\T$ with respect to some anholonomic frame $\{{X_i}\}$ of vector
fields on $Q$. One can show (see e.g.\ \cite{nonholvak}) that the
complete and vertical lifts of such a frame, expressed in the
coordinates $(q,w)$, take the form
\[
\clift{X_i}=X_i^j\fpd{}{q^j}-A^j_{ik}v^k\fpd{}{w^j},\quad
\vlift{X_i}=\fpd{}{w^i},
\]
where $X_i=X_i^j\partial/\partial q^j$ and $[X_i,X_j]=A_{ij}^k X_k$.
The condition (\ref{thetaLD2}) now becomes
\[
\Gamma\left(\fpd{L}{w^i}\right)-X_i^j\fpd{L}{q^j}+A^j_{ik}w^k\fpd{L}{w^j}=
\fpd{D}{w^i}.
\]
These are the Boltzmann-Hamel equations referred to in \cite{germ2},
where, since the results the same authors obtained in \cite{germ1}
were expressed only in standard coordinates, all conditions had to be
rederived from scratch.  Needless to say, one can also recast any of
the other coordinate-free conditions we have obtained in terms of
quasi-velocities.

\section{Illustrative examples and concluding remarks}

We start with a simple linear system with two degrees of freedom,
which will serve us well to illustrate a number of features of the
results we have obtained. Consider the system
\begin{align}
\ddot{q}_1 &= -aq_1 -bq_2 - \omega\dot{q}_1, \label{ex11} \\
\ddot{q}_2 &= bq_1 - aq_2 + \omega\dot{q}_2, \label{ex12}
\end{align}
where $a,b$ and $\omega$ are constant, non-zero parameters. The only
non-zero connection coefficients are
\[
\Gamma^1_1 = \onehalf \omega = - \Gamma^2_2,
\]
and we obtain
\[
\Phi^1_1 = \Phi^2_2 = a - \onefourth \omega^2, \qquad \Phi^1_2
=b=-\Phi^2_1.
\]
Since $\Phi$ is constant, the curvature tensor $R$ is zero so that
condition (\ref{Rcond}) is satisfied (in fact it is void anyway in
view of the dimension). It follows from Theorem~\ref{thm3} that any
constant symmetric $g$ should be a multiplier for a representation
of the given system in the form (\ref{dissip1}). We consider three
such non-singular matrices:
\[
g^{(1)} = \left(\begin{array}{cc} 1&0\\ 0&-1 \end{array}\right)
\qquad g^{(2)} = \left(\begin{array}{cc} 0&1\\ 1&0
\end{array}\right) \qquad g^{(3)} = \left(\begin{array}{cc} 1&0\\ 0&1
\end{array}\right).
\]
For $g^{(1)}$, it is easy to verify that with
\begin{align*}
L_1 &= \onehalf (\dot{q}_1^2 - \dot{q}_2^2) - \onehalf a(q_1^2 -
q_2^2) - bq_1q_2, \\
D_1 &= -\onehalf\omega(\dot{q}_1^2 + \dot{q}_2^2),
\end{align*}
we have a representation of the given system in the form
(\ref{dissip1}). In the case that $\omega=0$, $g^{(1)}$ is still a
multiplier for the standard inverse problem and $L_1$ then becomes a
genuine Lagrangian. Also $g^{(2)}$, which changes the order of the
equations, is a multiplier in that case, leading to an alternative
Lagrangian for the same reduced system. But that Lagrangian cannot
serve for a dissipative representation of the full system. Instead,
we have to take
\[
L_2 = \dot{q}_1\dot{q}_2 - a q_1q_2 - \onehalf b (q_2^2 - q_1^2) +
\onehalf\omega (q_1\dot{q_2} - q_2\dot{q}_1),
\]
and then $D_2=0$. We discover here that the given system is
variational, with $L_2$ as Lagrangian. For $g^{(3)}$ the situation
is different again. This time, this is not a multiplier for the
reduced system ($\omega=0$), it violates the condition that
$\Phi\hook g$ must be symmetric. But for the full system, we can
simply take a kinetic energy Lagrangian and then make a suitable
adaptation for $D$. Explicitly,
\begin{align*}
L_3 &= \onehalf(\dot{q}_1^2 + \dot{q}_2^2), \\
D_3 &= - a(q_1\dot{q}_1 + q_2\dot{q}_2) + b(q_1\dot{q}_2 -
q_2\dot{q}_1) + \onehalf \omega (\dot{q}_2^2 - \dot{q}_1^2).
\end{align*}

Let us now look at the same system from the gyroscopic point of
view. Since $R=0$, the rather peculiar condition (\ref{Rcond2}) of
Theorem~\ref{thm4} reduces to the usual $\Phi$-condition and
Theorem~\ref{thm4} simply states the standard Helmholtz conditions
for the existence of a multiplier. In other words, any multiplier
for a representation in the form (\ref{dissip2}) will be a
multiplier for a variational description as well. Of the
non-singular, constant $g^{(i)}$ we considered before, only
$g^{(2)}$ satisfies the conditions now, and we can take
\[
L_4 = \dot{q}_1\dot{q}_2 - a q_1q_2 - \onehalf b (q_2^2 - q_1^2),
\]
with the 2-form $\omega\, dq_1\wedge dq_2$ to satisfy the
requirements of Theorem~\ref{thm2}. It should of course not come as
a surprise that we must have a variational formulation here as well,
since we are in the situation described in the remark towards the end
of Section~3. In fact we have already found the Lagrangian for this
variational formulation: it is the function $L_2$.

For a second example, with $n=3$, consider the non-linear system
\begin{align}
\ddot{q}_1&= q_2 \dot{q}_1 \dot{q}_3,  \label{ex21}\\
\ddot{q}_2&= \dot{q}_3^2,  \label{ex22}\\
\ddot{q}_3&=  \dot{q}_1^2 - q_2^{-1}{\dot q}_2{\dot q}_3.
\label{ex23}
\end{align}
From (\ref{Phi}) one easily verifies that
\[
\Big(\Phi^i_j\Big) = \left( \begin{array}{ccc}
- \onefourth q_2^2\dot{q}_3^2 & - \threefourth \dot{q}_1\dot{q}_3 &
\onefourth q_2^2\dot{q}_1\dot{q}_3 + \threefourth \dot{q}_1\dot{q}_2 \\[1mm]
-\dot{q}_1\dot{q}_3 & \onehalf q_2^{-1}\dot{q}_3^2 &
-\onehalf q_2^{-1}\dot{q}_2\dot{q}_3 + \dot{q}_1^2 \\[1mm]
\onehalf q_2\dot{q}_1\dot{q}_3 + \onehalf q_2^{-1}\dot{q}_1\dot{q}_2
& -\onefourth q_2^{-2}\dot{q}_2\dot{q}_3 - \onehalf
q_2^{-1}\dot{q}_1^2 & -\onehalf q_2\dot{q}_1^2 + \onefourth
q_2^{-2}\dot{q}_2^2
 \end{array} \right)
\]
and the curvature tensor $R=\onethird \dv{}\Phi$ is given by
\begin{align*}
R &= -\Big(\oneeights {\dot q}_3 dq_1\wedge dq_2 -(\oneeights{\dot
q}_2+\oneeights q_2^2{\dot q}_3 ) dq_1\wedge dq_3 - \onefourth {\dot
q}_1 dq_2\wedge dq_3\Big)\otimes \fpd{}{q_1} \\
&\qquad \mbox{}+\Big( \onehalf{\dot q}_1 dq_1\wedge dq_3
- \onefourth {q_2^{-1}}{\dot q}_1
dq_2\wedge dq_3\Big)\otimes \fpd{}{q_2}\\
&\qquad \mbox{}-\Big(\onefourth{q_2^{-1}}{\dot q}_1 dq_1\wedge dq_2
+\onefourth q_2{\dot q}_1
dq_1\wedge dq_3 - \oneeights q_2^{-2}{\dot q}_2 dq_2\wedge dq_3\Big)\otimes \fpd{}{q_3}
\end{align*}
The multiplier problem is already quite complicated for a system of
this kind and it is not our intention here to explore all possible
solutions. For simplicity, therefore, we limit ourselves in the
dissipative case (\ref{dissip1}) to analyzing the existence of a
diagonal multiplier $g$ which depends on the coordinates $q_i$ only.
With such an ansatz, the curvature condition (\ref{Rcond}) in
Theorem~\ref{thm3} reduces to
\[
g_{33}=(g_{11} - 2 g_{22})q_2,
\]
and the requirement that $\DH{}g$ should be symmetric subsequently
imposes that $g_{11}=4\,g_{22}=\mbox{constant}$. Hence, up to a
constant factor, we are reduced to the possibility that
\[
g_{11}=4, \qquad g_{22}=1, \qquad g_{33}= 2q_2.
\]
As was mentioned in the previous section, the conditions imposed so
far are also integrability conditions in the standard inverse
problem so that, starting from the same ansatz, this $g$ would also
be the only candidate for a standard Lagrangian representation of
the system. When we compute $\del g$ now, we get
\[
\Big( (\del g)_{ij} \Big) = \left( \begin{array}{ccc}
4q_2\dot{q}_3 & 0 & 4q_2\dot{q}_1 \\
0 & 0 & 0 \\
4q_2\dot{q}_1 & 0 & 0 \end{array} \right).
\]
Since $\del g \neq 0$, our candidate cannot lead to a variational
formulation. On the other hand, Theorem~\ref{thm3} is satisfied, so
there must exist a $D$ for a dissipative representation. From the
requirement (\ref{delgD}) in Theorem~\ref{thm1}, one easily verifies
that such a $D$ must satisfy
\[
V_1(D) = 4q_2\dot{q}_1\dot{q}_3 + h_1, \qquad V_2(D) = h_2, \qquad
V_3(D)= 2 q_2 \dot{q}_1^2 + h_3,
\]
where the $h_i$ are as yet arbitrary functions of the coordinates.
The final requirement (\ref{Phi-gD}) of Theorem~\ref{thm1} then
shows that the $h_i$ can be taken to be zero. Thus,
\[
L= \onehalf \Big( 4\dot{q}_1^2 + \dot{q}_2^2 + 2q_2\dot{q}_3^2\Big)
\quad \mbox{and} \quad D= 2q_2\dot{q}_1^2 \dot{q}_3,
\]
provide a solution for the inverse problem of type (\ref{dissip1})
for the given system.

Concerning the inverse problem of type (\ref{dissip2}), it is less
appropriate to look for a diagonal $g$ (as the example with $n=2$
has shown), but even if we extend our search to a general $g$
depending on the $q_i$ only, the conditions of Theorem~\ref{thm4}
have no non-singular solution.

Consider, finally, the system
\begin{align}
\ddot{q}_1&=  b \dot{q}_1 \dot{q}_4,  \label{ex31}\\
\ddot{q}_2&= \dot{q}_2 \dot{q}_4 ,  \label{ex32}\\
\ddot{q}_3&=  (1 - b) {\dot q}_1 {\dot q}_2 + b q_2 {\dot q}_1{\dot q}_4
- b q_1 {\dot q}_2 {\dot q}_4 + (b + 1) {\dot q}_3 {\dot q}_4, \label{ex33}\\
\ddot{q}_4&=0, \label{ex34}
\end{align}
with $-1<b<1$ and $b\neq 0$. These equations can be interpreted as
the geodesic equations of the canonical connection associated with a
certain Lie group $G$, which is uniquely defined by $\nabla_XY
=\onehalf[X,Y]$, where $X$ and $Y$ are left-invariant vector fields.
In the case of the above system, the Lie group is listed as
$A_{4,9b}$ in \cite{Gerard4}, and it was shown (see also
\cite{APST}) that the system does not have a variational
formulation. This is a consequence of the integrability condition
(\ref{Rcond}) which can only be satisfied by multipliers for which
 $g_{13}=g_{23}=g_{33}=0$. But then, the
$\Phi$-condition in (\ref{Helmholtz2}) leads
automatically to $g_{34}=0$ so that there is no non-singular solution.

Notice that the system is invariant for translations in the $q_3$
and $q_4$ direction; it is therefore reasonable that we limit
ourselves in our search for non-conservative representations to
multipliers with the same symmetry. In the dissipative case, after
haven taken the same curvature condition into account, the
$\DH{}g$-condition leads to the further restrictions
\[
\fpd{g_{34}}{{\dot q}_3} = 0, \qquad \fpd{g_{34}}{{\dot q}_3}-
g_{34}=0,
\]
among others, from which again $g_{34}=0$ follows, with the same negative
conclusion. In the gyroscopic case, one can show that the condition
(\ref{Rcond2}) cannot be satisfied for a multiplier with
coefficients depending on the coordinates $q_i$ only.

Some final comments are in order. In the
case of linear systems such as our first example, it frequently
happens that a multiplier for the inverse problem exists which is a
function of time only (see for example \cite{Sarlet80}), so we will
briefly sketch here how our present theory can be extended to
general, potentially time-dependent second-order systems. First of
all, the extension of the calculus along $\tau:TQ \rightarrow Q$ to
a time-dependent setting has been fully developed in
\cite{SaVaCaMa}, which also contains the analogues of
the conditions (\ref{thetaL2}) and (\ref{Helmholtz2}) for the
inverse problem. In all generality, we are then talking about a
calculus of forms along the projection $\pi:\R \times TQ \rightarrow
\R\times Q$ say. But as has been observed for example in \cite{CSMBP}, the
extra time-component in this setting does not really play a role
when it comes to studying the Helmholtz conditions and their
integrability. That is to say: one has to use $dt$ and the contact
forms $dq^i-\dot{q}^i dt$ as local basis for forms along $\pi$ and a
suitable dual basis for the vector fields which includes the given
second-order system $\Gamma$; important geometrical objects such as
the Jacobi endomorphism $\Phi$ will pick up an extra term for sure,
but when restricted to act on vector fields without
$\Gamma$-component, all formulas of interest formally look the same.
It is therefore not so hard to apply a suitably reformulated version
of the present theory to time-dependent systems, when needed. That
important formulas formally look the same will be seen also in the
final observations in the appendix, where the setting is essentially
time-dependent, though the approach adopted there is quite different
again from the calculus along $\pi$ we are referring to here.

\section*{Appendix}

In this appendix we will relate our results to those obtained by
Kielau et al.\ in \cite{germ2,germ1}, especially the latter; but first
we wish to derive those results anew, in a way which allows us to
explain an interesting feature of them which was mentioned in
\cite{germ1} but not fully dealt with there.

The problem discussed in \cite{germ2,germ1} differs in several ways
from the one which has been the subject of our paper, the most
important of which is that it is assumed there that a system of
second-order ordinary differential equations is given in implicit
form $f_i(t,q,\dot{q},\ddot{q})=0$, and the problem posed is to find
necessary and sufficient conditions on the functions $f_i$ such that
the equations may be written in the form (\ref{dissip1}), where $L$
and $D$ are allowed to be time-dependent. That is, the question is
whether the equations are of Lagrangian type with dissipation as
they stand, rather then whether they may be made equivalent to such
equations by a choice of multiplier.

It is probably most satisfactory to approach the inverse problem for a
second-order system given in implicit form by using the methods
associated with variational sequences, rather
than the techniques employed in the body of the paper.  Fortunately we
will need only the rudiments of such methods, one version of which we
now briefly describe; justification for the unsupported claims we make
can be found in \cite{vitolo}, for example.

We deal with the (trivial) fibred manifold $\pi:Q\times\R\to\R$, and
its infinite jet bundle $J^\infty(\pi)$; this may seem a bit
extravagant when we are interested only in second-order equations, but
is convenient for technical reasons.  However, all functions and forms
under consideration will be of finite type (i.e.\ depend on finitely
many variables).  We take coordinates $(t,q^i)$ on $\R\times Q$; the
jet coordinates are written $\dot{q}^i$, $\ddot{q}^i$ and so on. We
denote the contact 1-forms by
\[
\theta^i=dq^i-\dot{q}^i dt,\quad
\dot{\theta}^i=d\dot{q}^i-\ddot{q}^idt,\quad
\ddot{\theta}^i=d\ddot{q}^i-\dddotqi dt,\quad\ldots.
\]
We need two exterior-derivative-like operators on exterior forms on
$J^\infty(\pi)$. The first is the vertical differential $d_V$ (not to
be confused with $\dv{}$), which is defined by
\[
d_Vf=\fpd{f}{q^i}\theta^i+\fpd{f}{\dot{q}^i}\dot{\theta}^i+
\fpd{f}{\ddot{q}^i}\ddot{\theta}^i+\cdots,\quad
d_V dt=d_V \theta^i=d_V \dot{\theta}^i=d_V \ddot{\theta}^i=\ldots=0.
\]
The key properties of $d_V$ are that $d_V^2=0$, and that $d_V$ is
locally exact.  The second operator is the variational differential
$\delta$, about which we need to say just the following.  First, for a
Langrangian $L(t,q,\dot{q})$
\[
\delta(Ldt)=E_i(L)\theta^i\wedge dt
\]
where the $E_i(L)$ are the Euler-Lagrange expressions.  The 2-forms
which, like $\delta(Ldt)$, are linear combinations of the
$\theta^i\wedge dt$ are called source forms in \cite{vitolo} and
dynamical forms in \cite{olgageoff}.  Secondly, for any source form
$\varepsilon=f_i\theta^i\wedge dt$, $f_i=f_i(t,q,\dot{q},\ddot{q})$,
\[
\delta \varepsilon=-\onehalf(r_{ij}\theta^i\wedge\theta^j
+s_{ij}\theta^i\wedge\dot{\theta}^j
+t_{ij}\theta^i\wedge\ddot{\theta}^j)\wedge dt,
\]
where the coefficients are given by
\begin{align*}
r_{ij}&=\fpd{f_i}{q^j}-\fpd{f_j}{q^i}
-\onehalf\frac{d}{dt}\left(\fpd{f_i}{\dot{q}^j}-\fpd{f_j}{\dot{q}^i}\right)
+\onehalf\frac{d^2}{dt^2}\left(\fpd{f_i}{\ddot{q}^j}-\fpd{f_j}{\ddot{q}^i}\right)\\
s_{ij}&=\fpd{f_i}{\dot{q}^j}+\fpd{f_j}{\dot{q}^i}
-2\frac{d}{dt}\left(\fpd{f_j}{\ddot{q}^i}\right)\\
t_{ij}&=\fpd{f_i}{\ddot{q}^j}-\fpd{f_j}{\ddot{q}^i}.
\end{align*}
Again, $\delta^2=0$ and $\delta$ is locally exact.  With a source
form $\varepsilon=f_i\theta^i\wedge dt$,
$f_i=f_i(t,q,\dot{q},\ddot{q})$, one associates the second-order
system $f_i=0$, and conversely; so that $\delta \varepsilon=0$ is
necessary and sufficient for the second-order system $f_i=0$ to be
locally of Euler-Lagrange type.  The vanishing of the coefficients
$r_{ij}$, $s_{ij}$ and $t_{ij}$ are the (classical) Helmholtz
conditions (see e.g.\ \cite{germ1,olgageoff,Sant}).

By considering the transformation properties of the jet coordinates
and the contact forms under transformations of the form $\bar{q}^i=
\bar{q}^i(t,q)$, $\bar{t}=t$ one can show that the set of forms
spanned by $\{dt,\theta^i,\dot{\theta}^i\}$ with coefficients which
are functions of $t$, $q$ and $\dot{q}$ is well-defined.  We call
forms like this first-order forms.  Note that $d_V$ maps first-order
forms to first-order forms.  Moreover, one proves exactness of $d_V$
by using essentially the homotopy operator for the de Rham complex
for the variables $q$, $\dot{q}$, \ldots, treating $t$ as a
parameter.  It follows that if $\alpha$ is of first order and
satisfies $d_V\alpha=0$ then there is a {\em first-order\/} form
$\beta$ such that $\alpha=d_V\beta$.

The first step in applying these concepts to dissipative systems is
to characterize dissipative force terms using them.

\begin{prop}
The first-order source form
\[
\Delta=\fpd{D}{\dot{q}^i}\theta^i\wedge dt
\]
satisfies $\delta\Delta=d_V\Delta$.  Conversely, if $\varepsilon$ is
a first-order source form such that
$\delta\varepsilon=d_V\varepsilon$ then $\varepsilon=\Delta$ for
some first-order function $D$.
\end{prop}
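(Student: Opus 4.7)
The plan for the direct statement is to substitute $f_i=\partial D/\partial\dot{q}^i$ into the given definitions of $r_{ij}$, $s_{ij}$ and $t_{ij}$. Since $D$ is first-order, $f_i$ has no $\ddot{q}$-dependence, so $t_{ij}=0$ and the $d^2/dt^2$-term in $r_{ij}$ drops out; equality of mixed partials gives $\partial f_i/\partial\dot{q}^j=\partial f_j/\partial\dot{q}^i$, which kills the $d/dt$-term in $r_{ij}$ as well. What remains is $r_{ij}=\partial^2 D/(\partial q^j\,\partial\dot{q}^i)-\partial^2 D/(\partial q^i\,\partial\dot{q}^j)$ and $s_{ij}=2\,\partial^2 D/(\partial\dot{q}^i\,\partial\dot{q}^j)$. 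A parallel direct expansion gives $d_V\Delta = (\partial^2 D/(\partial q^j\,\partial\dot{q}^i))\,\theta^j\wedge\theta^i\wedge dt + (\partial^2 D/(\partial\dot{q}^j\,\partial\dot{q}^i))\,\dot{\theta}^j\wedge\theta^i\wedge dt$, and matching with $\delta\Delta$ reduces to the antisymmetrization $A_{ji}\theta^j\wedge\theta^i=\tfrac12(A_{ji}-A_{ij})\theta^j\wedge\theta^i$ on the first piece and a straightforward sign check on the second.

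For the converse, I would write $\varepsilon=f_i\theta^i\wedge dt$ with $f_i=f_i(t,q,\dot{q})$ and expand both $d_V\varepsilon$ and $\delta\varepsilon$. Both live in the span of $\theta^i\wedge\theta^j\wedge dt$ and $\theta^i\wedge\dot{\theta}^j\wedge dt$, since $\ddot{q}$-independence of $f_i$ makes $t_{ij}$ vanish and removes all $d/dt(\partial f_j/\partial\ddot{q}^i)$ contributions to $s_{ij}$. Matching coefficients of $\theta^i\wedge\dot{\theta}^j\wedge dt$ then forces $\partial f_i/\partial\dot{q}^j=\tfrac12 s_{ij}$; since $s_{ij}$ is symmetric in $i,j$, this gives the key symmetry condition $\partial f_i/\partial\dot{q}^j=\partial f_j/\partial\dot{q}^i$. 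With that symmetry the remaining $d/dt$ piece of $r_{ij}$ also vanishes, and the $\theta^i\wedge\theta^j\wedge dt$ coefficients on the two sides reduce identically to $\tfrac12(\partial f_i/\partial q^j-\partial f_j/\partial q^i)$, so no further constraint emerges.

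To conclude, the symmetry $\partial f_i/\partial\dot{q}^j=\partial f_j/\partial\dot{q}^i$ is precisely the closedness of the 1-form $f_i\,d\dot{q}^i$ in the fibre coordinates $\dot{q}^i$, with $t$ and $q$ treated as parameters; the fibrewise Poincaré lemma then produces a local first-order function $D(t,q,\dot{q})$ with $f_i=\partial D/\partial\dot{q}^i$, i.e.\ $\varepsilon=\Delta$. This is really just the first-order specialization of the local exactness of $d_V$ recalled in the excerpt. The main (though rather mild) obstacle is keeping signs straight under the antisymmetrizations of the $\theta\wedge\theta$ and $\theta\wedge\dot{\theta}$ components; once that bookkeeping is set up, everything else is a direct expansion of the given definitions.
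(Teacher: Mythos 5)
Your proposal is correct and follows essentially the same route as the paper's proof: a direct expansion of $\delta\Delta$ (respectively $\delta\varepsilon$ and $d_V\varepsilon$) with coefficient matching, the key symmetry $\partial f_i/\partial\dot{q}^j=\partial f_j/\partial\dot{q}^i$ extracted from the $\theta^i\wedge\dot{\theta}^j\wedge dt$ terms, and the fibrewise exactness argument (with $t,q$ as parameters) producing $D$ with $f_i=\partial D/\partial\dot{q}^i$, after which the $\theta^i\wedge\theta^j\wedge dt$ terms agree automatically.
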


\begin{proof}
Applying the formula for $\delta$ acting on a source form one finds that
\begin{align*}
\delta\Delta
&=
-\spd{D}{q^j}{\dot{q}^i}\theta^i\wedge\theta^j\wedge dt
-\spd{D}{\dot{q}^i}{\dot{q}^j}\theta^i\wedge\dot{\theta}^j\wedge dt\\
&=\left(\fpd{}{q^j}\left(\fpd{D}{\dot{q}^i}\right)\theta^j
+\fpd{}{\dot{q}^j}\left(\fpd{D}{\dot{q}^i}\right)\dot{\theta}^j\right)\wedge
\theta^i\wedge dt\\
&=d_V\Delta.
\end{align*}
Conversely, if $\varepsilon=f_i\theta^i\wedge dt$ is of
first order then $t_{ij}=0$ and
\[
s_{ij}=\fpd{f_i}{\dot{q}^j}+\fpd{f_j}{\dot{q}^i}.
\]
But
\[
d_V\varepsilon=\fpd{f_j}{q^i}\theta^i\wedge\theta^j\wedge dt-
\fpd{f_i}{\dot{q}^j}\theta^i\wedge\dot{\theta}^j\wedge dt,
\]
and so if $\delta\varepsilon=d_V\varepsilon$ then
\[
s_{ij}=\fpd{f_i}{\dot{q}^j}+\fpd{f_j}{\dot{q}^i}=2\fpd{f_i}{\dot{q}^j} \quad\mbox{or}\quad
\fpd{f_j}{\dot{q}^i}=\fpd{f_i}{\dot{q}^j},
\]
so that there is a function $D=D(t,q,\dot{q})$ such that $f_i=\partial
D/\partial\dot{q}^i$.  The terms in $\theta^i\wedge\theta^j\wedge dt$
then agree.
\end{proof}

Now take $\varepsilon$ to be the source form representing the given
equations. If they are of Euler-Lagrange type with a dissipative term
then $\varepsilon=\delta(Ldt)-\Delta$, so
$\delta\varepsilon=\delta\Delta=d_V\Delta$. Then
$\delta\varepsilon$ is of first order, and furthermore
$d_V\delta\varepsilon=0$. These are necessary conditions for the
given system to take the desired form. They are in fact sufficient
also, as we now show.

\begin{thm}
A system of second-order ordinary differential equations
$f_i(t,q,\dot{q},\ddot{q})=0$ may be written locally as
\[
\frac{d}{dt}\left(\fpd{L}{\dot{q}^i}\right)-\fpd{L}{q^i}=\fpd{D}{\dot{q}^i}
\]
for some first-order functions $L$ and $D$ if and only if the
corresponding source form $\varepsilon$ is such that
$\delta\varepsilon$ is of first order and satisfies
$d_V\delta\varepsilon=0$.
\end{thm}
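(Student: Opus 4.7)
The forward direction is essentially immediate from the paragraph preceding the theorem: if $\varepsilon=\delta(L\,dt)-\Delta$ with $\Delta=(\partial D/\partial\dot{q}^i)\theta^i\wedge dt$, then $\delta^2=0$ together with the previous Proposition give $\delta\varepsilon=-\delta\Delta=-d_V\Delta$, which is visibly first order and satisfies $d_V\delta\varepsilon=-d_V^2\Delta=0$. So the real work lies in the converse, for which my plan has three steps: first, produce a first-order source form $\Delta$ satisfying $d_V\Delta=-\delta\varepsilon$; second, use the previous Proposition to upgrade $\Delta$ to the specific form $(\partial D/\partial\dot{q}^i)\theta^i\wedge dt$; third, deduce $\varepsilon+\Delta=\delta(L\,dt)$ from $\delta(\varepsilon+\Delta)=0$ via the classical Helmholtz theorem.

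For the first step I would invoke the $d_V$-Poincar\'e lemma for first-order forms stated in the excerpt: since $\delta\varepsilon$ is first order and $d_V$-closed by hypothesis, there is a first-order 2-form $\eta$ with $d_V\eta=-\delta\varepsilon$. The delicate task is to refine $\eta$ into a source form. Structurally this looks achievable because the first-order hypothesis forces $t_{ij}=0$, so $\delta\varepsilon$ is supported only on the bigraded pieces of type $\theta^i\wedge\theta^j\wedge dt$ and $\theta^i\wedge\dot\theta^j\wedge dt$; but these are precisely the pieces produced by $d_V$ acting on a source form $g_i\theta^i\wedge dt$. Matching coefficients directly would reduce the task to the PDE system
\[
\fpd{g_i}{\dot{q}^j}=-\tfrac{1}{2}s_{ij},\qquad \fpd{g_j}{q^i}-\fpd{g_i}{q^j}=r_{ij},
\]
for which the symmetry of $s_{ij}$ (automatic once $t_{ij}=0$) handles the first equation, while the remaining compatibility conditions should be exactly what $d_V\delta\varepsilon=0$ encodes.

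With $\Delta=g_i\theta^i\wedge dt$ in hand, the symmetry $\partial g_i/\partial\dot{q}^j=\partial g_j/\partial\dot{q}^i$ gives $\delta\Delta=d_V\Delta$, and the converse half of the previous Proposition then supplies a first-order function $D$ with $g_i=\partial D/\partial\dot{q}^i$. Setting $\varepsilon'=\varepsilon+\Delta$ yields $\delta\varepsilon'=\delta\varepsilon+\delta\Delta=\delta\varepsilon+d_V\Delta=0$; the classical Helmholtz theorem (local variational exactness at the source-form level) then supplies a first-order Lagrangian $L$ with $\varepsilon'=\delta(L\,dt)$, whence $\varepsilon=\delta(L\,dt)-\Delta$, which is the desired dissipative representation.

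The main obstacle I expect is the refinement step above, i.e.\ showing that one may choose the $d_V$-primitive of $-\delta\varepsilon$ inside the space of source forms. In spirit this is a Poincar\'e lemma restricted to a subcomplex of the first-order variational bicomplex, and although the bigraded structure makes the coefficient-matching transparent, verifying carefully that the integrability conditions for $g_i$ are exactly those encoded by the hypothesis $d_V\delta\varepsilon=0$ is where the real technical content of the proof sits.
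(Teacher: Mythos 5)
Your overall architecture --- use the $d_V$-Poincar\'e lemma to produce a first-order primitive of $-\delta\varepsilon$, upgrade it to a source form $\Delta$ of dissipative type via the preceding Proposition, then apply local $\delta$-exactness to $\varepsilon+\Delta$ --- is exactly the paper's, and your forward direction and final step are fine. The genuine gap is that the step you yourself flag as ``the main obstacle'', namely showing the primitive can be chosen to be a source form of the required type, is precisely the technical content of the theorem and is not carried out. Moreover the one concrete claim you make about it is off: the $i\leftrightarrow j$ symmetry of $s_{ij}$ (which does follow from $t_{ij}=0$) is not what gives solvability of $\fpd{g_i}{\dot{q}^j}=-\onehalf s_{ij}$; for that you need $\fpd{s_{ij}}{\dot{q}^k}=\fpd{s_{ik}}{\dot{q}^j}$, which is one of the components of $d_V\delta\varepsilon=0$, while the remaining components (the relation between $\fpd{r_{ij}}{\dot{q}^k}$ and the $q$-derivatives of $s$, and the cyclic condition on $\fpd{r_{ij}}{q^k}$) are what you would have to invoke, one by one, to integrate your second equation after adjusting the $g_i$ by functions of $(t,q)$. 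So your coefficient-matching route amounts to re-deriving the generalized Helmholtz analysis by hand: it can be pushed through, but it is the whole proof rather than a detail, and as written it is missing.

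The paper closes this step with a short normalization argument you could adopt instead. Write $\delta\varepsilon=d_V\alpha$ with $\alpha$ first order (local exactness of $d_V$ on first-order forms); contact 2-forms in $\alpha$ may be discarded since their image under $d_V$ contains no $dt$ and so cannot contribute, leaving $\alpha=(\lambda_i\theta^i+\mu_i\dot{\theta}^i)\wedge dt$. The absence of $\dot{\theta}^i\wedge\dot{\theta}^j$ terms in $\delta\varepsilon$ forces $\fpd{\mu_j}{\dot{q}^i}=\fpd{\mu_i}{\dot{q}^j}$, so $\mu_i=\fpd{\psi}{\dot{q}^i}$, and subtracting the $d_V$-exact term $d_V(\psi\,dt)$ turns $\alpha$ into a source form $\nu_i\theta^i\wedge dt$. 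Finally, first-orderness gives $t_{ij}=0$, hence $s_{ij}$ is symmetric, hence the $\theta^i\wedge\dot{\theta}^j$ coefficient of $d_V\alpha$ is symmetric, i.e.\ $\fpd{\nu_i}{\dot{q}^j}=\fpd{\nu_j}{\dot{q}^i}$, so $\nu_i=-\fpd{D}{\dot{q}^i}$ and $\alpha=-\Delta$; the conclusion then follows as in your third step. The point is that all the compatibility conditions you were worried about are consumed at once by the abstract exactness of $d_V$ together with the freedom to modify the primitive by $d_V$-exact terms, so no explicit integrability verification is needed.
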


\begin{proof}
It remains to prove sufficiency.  By the local exactness of $d_V$ we
may assume that $\delta\varepsilon=d_V\alpha$ for some first-order
2-form $\alpha$ (not necessarily a source form), which is determined
only up to the addition of a $d_V$-exact form. Contact 2-forms
$\beta$ can be ignored in $\alpha$ since $d_V\beta$ then contains no
$dt$ terms and therefore cannot contribute to $\delta\varepsilon$.
Moreover, if we put
\[
\alpha=(\lambda_i\theta^i+\mu_i\dot{\theta}^i)\wedge dt,
\]
since $\delta\varepsilon$ contains no
$\dot{\theta}^i\wedge\dot{\theta}^j$ terms either, we must have
\[
\fpd{\mu_j}{\dot{q}^i}=\fpd{\mu_i}{\dot{q}^j},
\]
so that $\mu_i=\partial\psi/\partial\dot{q}^i$ for some function
$\psi(t,q,\dot{q})$.  Hence, up to the $d_V$-exact term $d_V(\psi
dt)$, $\alpha$ is of the form
\[
\alpha=\left(\lambda_i-\fpd{\psi}{q^i}\right)\theta^i\wedge dt,
\]
i.e.\ is a source form, say $\alpha=\nu_i\theta^i\wedge dt$. Since
$\delta\varepsilon$ is of first order we must have $t_{ij}=0$,
whence $s_{ij}$ is symmetric in $i$ and $j$.  It follows that the
coefficient of $\theta^i\wedge\dot{\theta}^j$ in $d_V\alpha$ must be
symmetric in $i$ and $j$, whence $\nu_i=-\partial D/\partial
\dot{q}^i$ for some function $D=D(t,q,\dot{q})$, and
$\alpha=-\Delta$.  Then
$\delta\varepsilon=d_V\alpha=-d_V\Delta=-\delta\Delta$, and so there
is some $L$ such that $\varepsilon=\delta(Ldt)-\Delta$, as required.
\end{proof}

The point of interest that we mentioned at the beginning of this
appendix is that this version of the result represents the
conditions in part as the closure (under $d_V$) of a certain form,
namely $\delta\varepsilon$.  That it might be possible to state the
conditions in such a way was raised speculatively in \cite{germ1},
but the form and operator were not specifically identified there.

Under the assumption that $\delta\varepsilon$ is of first order, so
that $t_{ij}=0$, it is easy to verify that the $d_V$-closure
conditions are
\begin{align*}
0&=\fpd{r_{ij}}{q^k}+\fpd{r_{jk}}{q^i}+\fpd{r_{ki}}{q^j}\\
\fpd{r_{ij}}{\dot{q}^k}&=\fpd{s_{ik}}{q^j}-\fpd{s_{jk}}{q^i}\\
\fpd{s_{ij}}{\dot{q}^k}&=\fpd{s_{ik}}{\dot{q}^j}.
\end{align*}
It must not be forgotten that $r_{ij}$ and $s_{ij}$ are supposed to
be of first order; in addition, $t_{ij}=0$ means that we have
\[
\fpd{f_i}{\ddot{q}^j}=\fpd{f_j}{\ddot{q}^i}.
\]
These are the generalized Helmholtz conditions as given in
\cite{germ1}. However, it turns out that the first and last of the
closure conditions are consequences of the other conditions, as we
showed in \cite{zammnote}. It then follows easily that the following
conditions are equivalent to those given above:\
$f_i=g_{ij}\ddot{q}^j+h_i$ with $g_{ij}$ symmetric, where $g_{ij}$,
$h_i$ are of first order and further satisfy
\begin{align*}
&\fpd{g_{ij}}{\dot{q}^k}=\fpd{g_{ik}}{\dot{q}^k}\\
&\fpd{g_{ik}}{q^j}-\onehalf\spd{h_i}{\dot{q}^j}{\dot{q}^k}=
\fpd{g_{jk}}{q^i}-\onehalf\spd{h_j}{\dot{q}^i}{\dot{q}^k}\\
&\sum_{i,j,k}
\left(\spd{h_i}{q^j}{\dot{q}^k}-\spd{h_i}{q^k}{\dot{q}^j}\right)=0,
\end{align*}
where $\sum_{i,j,k}$ stands for the cyclic sum over the indices.

As the problem has been presented so far in this appendix, we must
take $g_{ij}$ and $h_i$ as given; the equations above provide a test
for determining whether the given second-order system can be put
into the required form.  However, it is now possible to regard these
equations from the alternative point of view:\ we set
$h_i=g_{ij}f^j$, where the second-order system is given in normal
form $\ddot{q}^i=f^i(t,q,\dot{q})$; we regard the $f^i$ as known but
the $g_{ij}$ as to be determined; the equations above now become
partial differential equations for the unknowns $g_{ij}$. We leave
it to the reader to verify that they correspond (take the autonomous
case for simplicity), in the order written, to the conditions of
Theorem~3, namely $\DV{}g$ is symmetric, $\DH{}g$ is symmetric, and
$\sum_{X,Y,Z}g(R(X,Y),Z)=0$.

\subsubsection*{Acknowledgements}
The first author is a Postdoctoral Fellow of the Research Foundation
-- Flanders (FWO). The second author is Adjunct Professor at La
Trobe University and acknowledges the hospitality of the Department
of Mathematics and Statistics during numerous visits. The third
author is a Guest Professor at Ghent University:\ he is grateful to
the Department of Mathematics for its hospitality.


\begin{thebibliography}{99}

\bibitem{APST}
J.\,E.\ Aldridge, G.\,E.\ Prince, W.\ Sarlet and G.\ Thompson, An EDS
approach to the inverse problem in the calculus of variations, {\it
J.\ Math.\ Phys.\/} {\bf 47} (2006) 103508.

\bibitem{AT92}
I.\ Anderson and G.\ Thompson, The inverse problem of the calculus
of variations for ordinary differential equations, {\it  Mem.\
Amer.\ Math.\ Soc.\/} {\bf 473} (1992).

\bibitem{Buca}
I.\ Bucataru and M.\,F.\ Dahl, Semi-basic 1-forms and Helmholtz
conditions for the inverse problem of the calculus of variations,
{\it J.\ Geom.\ Mech.\/} {\bf 1} (2009) 159--180.

\bibitem{CaMa90}
J.\,F.\ Cari\~{n}ena and E.\ Mart\'{\i}nez, Generalized Jacobi
equation and inverse problem in classical mechanics, in {\it Group
Theoretical Methods in Physics\/}, Proc. 18th Int. Colloquium 1990,
Moscow, USSR, Vol. II, V.\,V.\ Dodonov and V.\,I.\ Manko eds.\ (Nova
Science Publishers 1991) 59--64.

\bibitem{Cra81}
M.\ Crampin, On the differential geometry of the Euler-Lagrange equations
and the inverse problem of Lagrangian dynamics, {\it J.\ Phys.\ A:\ Math.\
Gen.\/} {\bf 14} (1981) 2567--2575.

\bibitem{Cra83}
M.\ Crampin, Tangent bundle geometry for Lagrangian dynamics, {\it J.\ Phys.\
A:\ Math.\ Gen.\/} {\bf 16} (1983) 3755--3772.

\bibitem{nonholvak}
M.\ Crampin and T.\ Mestdag, Anholonomic frames in constrained
dynamics, to appear in {\it Dynamical Systems\/} (2009).  DOI:
10.1080/14689360903360888.

\bibitem{zammnote}
M.\ Crampin, T.\ Mestdag and W.\ Sarlet, On the generalized Helmholtz
conditions for Lagrangian systems with dissipative forces, to appear
in {\it Z.\ Angew.\ Math.\ Mech.\/} (2010).

\bibitem{CSMBP}
M.\ Crampin, W.\ Sarlet, E.\ Mart\'{\i}nez, G.\,B.\ Byrnes and G.\,E.\
Prince, Towards a geometrical understanding of Douglas's solution of
the inverse problem of the calculus of variations, {\it Inverse
Problems\/} {\bf 10} (1994) 245--260.

\bibitem{napoli}
R.\ de Ritis, G.\ Marmo, G.\ Platania and P.\ Scudellaro,
Inverse problem in classical mechanics: dissipative systems,
{\it Int.\ J.\ Theor.\ Phys.\/} {\bf 22} (1983) 931--946.

\bibitem{FrNi56}
A.\ Fr\"{o}licher and A.\ Nijenhuis, Theory of vector-valued
differential forms, {\it Proc.\ Ned.\ Acad.\ Wetensch.\ S\'{e}r.\ A\/}
{\bf 59} (1956) 338--359.


\bibitem{Gerard4}
R.\ Ghanam, G.\ Thompson, and E.\,J.\ Miller, Variationality of
four-dimensional Lie group connections, {\it J.\ Lie Theory\/} {\bf 14}
(2004) 395-ö425.

\bibitem{Goldstein}
H.\ Goldstein, {\it Classical Mechanics\/} (2nd.\ edition)
(Addison-Wesley 1980).

\bibitem{Gri1}
J.\ Grifone, Structure presque tangente et connexions I, {\it Ann.\
Inst.\ Fourier\/} {\bf 22} (1972) 287--334.

\bibitem{GM1}
J.\ Grifone and Z.\ Muzsnay, On the inverse problem of the
variational calculus: existence of Lagrangians associated with a
spray in the isotropic case, {\it Ann.\ Inst.\ Fourier\/} {\bf 49}
(1999) 1387--1421.

\bibitem{GM2}
J.\ Grifone and Z.\ Muzsnay, {\it Variational Principles for
Second-order Differential Equations\/}, (World Scientific 2000).

\bibitem{germ2}
U.\ Jungnickel, G.\ Kielau, P.\ Maisser and A.\ M\"{u}ller, A
generalization of the Helmholtz conditions for the existence of a
first-order Lagrangian using nonholonomic velocities, {\it Z.\
Angew.\ Math.\ Mech.\/}  {\bf 89} (2009) 44--53.

\bibitem{germ1}
G.\ Kielau and P.\ Maisser, A generalization of the Helmholtz
conditions for the existence of a first-order Lagrangian, {\it Z.\
Angew.\ Math.\ Mech.\/} {\bf 86} (2006) 722--735.

\bibitem{olgageoff}
O.\ Krupkov\'{a} and G.\,E.\ Prince, Second order ordinary
differential equations in jet bundles and the inverse problem of the
calculus of variations, in {\it Handbook of Global Analysis\/} D.\
Krupka and D.\ Saunders eds.\ (Elsevier 2008) 837--904.

\bibitem{MaCaSaI}
E.\ Mart\'{\i}nez, J.\,F.\ Cari\~{n}ena and W.\ Sarlet, Derivations of
differential forms along the tangent bundle projection, {\it Differential
Geom.\ Appl.\/} {\bf 2} (1992) 17--43.

\bibitem{MaCaSaII}
E.\ Mart\'{\i}nez, J.\,F.\ Cari\~{n}ena and W.\ Sarlet, Derivations of
differential forms along the tangent bundle projection II, {\it
Differential Geom.\ Appl.\/} {\bf 3} (1993) 1--29.

\bibitem{Mor}
G.\ Morandi, C.\ Ferrario, G.\ Lo Vecchio, G.\ Marmo and C.\ Rubano,
The inverse problem in the calculus of variations and the geometry
of the tangent bundle, {\it Phys.\ Rep.\/} {\bf 188} (1990)
147--284.

\bibitem{Rosenberg}
R.\ M.\ Rosenberg, {\it Analytical Dynamics of Discrete Systems\/}
(Plenum Press 1977).

\bibitem{Sant}
R.\,M.\ Santilli, {\it Foundations of Theoretical Mechanics I. The
Inverse Problem in Newtonian Mechanics\/}, (Spinger 1978).

\bibitem{Sarlet80}
W.\ Sarlet, On linear nonconservative systems derivable from a
variational principle, {\it Hadronic J.\/} {\bf 3} (1980)
765--793.

\bibitem{Sa}
W.\ Sarlet, The Helmholtz conditions revisited. A new approach to
the inverse problem of Lagrangian dynamics, {\it J.\ Phys.\ A:\ Math.\ Gen.\/}
{\bf 15} (1982) 1503--1517.

\bibitem{SaCaCr84}
W.\ Sarlet, F.\ Cantrijn and M.\ Crampin, A new look at second-order
equations and Lagrangian mechanics, {\it J.\ Phys.\ A:\ Math.\ Gen.\/}
{\bf 17} (1984) 1999--2009.

\bibitem{SaCraMa}
W.\ Sarlet, M.\ Crampin and E.\ Mart\'{\i}nez, The integrability
conditions in the inverse problem of the calculus of variations for
second-order ordinary differential equations, {\it Acta Appl.\
Math.\/} {\bf 54} (1998) 233--273.

\bibitem{SaVaCaMa}
W.\ Sarlet, A.\ Vandecasteele, F.\ Cantrijn and E.\ Mart\'{\i}nez,
Derivations of forms along a map: the framework for time-dependent
second-order equations, {\it Differential Geom.\ Appl.\/} {\bf 5} (1995) 171--203.

\bibitem{Szila}
J.\ Szilasi, A setting for spray and Finsler geometry, in {\it
Handbook of Finsler Geometry\/} Vol.\ 2, P.\,L.\ Antonelli ed.\
(Kluwer 2003) 1185--1426.

\bibitem{vitolo}
R.\ Vitolo, Variational sequences, in {\it Handbook of Global
Analysis\/} D.\ Krupka and D.\ Saunders eds.\  (Elsevier 2008)
1115--1163.


\end{thebibliography}
\end{document}